\newcommand{\be}{\begin{equation}}
\newcommand{\en}{\end{equation}}
\newcommand{\bea}{\begin{eqnarray}}
\newcommand{\ena}{\end{eqnarray}}
\newcommand{\beano}{\begin{eqnarray*}}
\newcommand{\enano}{\end{eqnarray*}}
\newcommand{\bee}{\begin{enumerate}}
\newcommand{\ene}{\end{enumerate}}
\newcommand{\ad}{^{\mbox{\scriptsize $\dag$}}}
\newcommand{\mc}{\mathcal}
\newcommand{\mb}{\mathbb}
\newcommand{\A}{\mathfrak{A}}
\newcommand{\h}{\mathcal{H}}
\newcommand{\gF}{{\mathfrak F}}
\newcommand{\cF}{{\mathcal F}}
\newcommand{\B}{{\mc B}}
\def\L{{\mathcal L}}
\newcommand{\Lc}{{\mc L}}
\newcommand{\D}{{\mc D}}
\newcommand{\E}{{\mc E}}
\newcommand{\M}{{\mathfrak M}}
\newcommand{\up}{\upharpoonright}
\newtheorem{defn}{Definition}[section]
\newtheorem{thm}[defn]{Theorem}
\newtheorem{prop}[defn]{Proposition}
\newtheorem{lemma}[defn]{Lemma}
\newtheorem{example}[defn]{Example}
\newtheorem{rem}[defn]{Remark}
\def\x{\relax\ifmmode {\mbox{*}}\else*\fi}
\newcommand{\beex}{\begin{example}$\!\!${\bf }$\;$\rm }
\newcommand{\enex}{ \end{example}}
\newcommand{\berem}{\begin{rem}$\!\!${\bf }$\;$\rm }
\newcommand{\enrem}{ \end{rem}}
\newcommand{\bedefi}{\begin{defn}$\!\!${\bf }$\;$\rm }
\newcommand{\findefi}{\end{defn}}
\newcommand{\pa}{partial \mbox{*-algebra}}
\newcommand{\ha}{^{\ast}}
\newcommand{\CN}{{\mathbb C}}
\newcommand{\ip}[2]{\left\langle {#1}\left|{#2}\right.\right\rangle}
\newcommand{\gl}{{\mathfrak L}}
\newcommand{\LDD}{\gl(\D,\D^\times)}
\newcommand{\LD}{\Lc^\dagger(\D)}
\newcommand{\LBDD}[1]{\mbox{${\gl}_{\textsf{B}}^{#1}(\D,\D^\times)$}}
\def\H{{\mathcal H}}
\newcommand{\restr}[1]{_{\!\up{#1}}}
\def\LG{{\mathfrak L}}
\newcommand{\LSS}{{\LG}(\S,\S^\times)}
\def\S{\mathcal S}
\begin{document}
\title[Operators in RHS: spectral properties]
{Operators in Rigged Hilbert spaces: some spectral properties }

\author{Giorgia Bellomonte}
\author{Salvatore di Bella}
\author{Camillo Trapani}
\address{Dipartimento di Matematica e Informatica,
Universit\`a di Palermo, I-90123 Palermo, Italy}
\email{bellomonte@math.unipa.it}
\email{salvatore.dibella@math.unipa.it}
\email{camillo.trapani@unipa.it}

\subjclass[2010]{47L60, 47L05} \keywords{Rigged Hilbert spaces, operators, spectrum}
\date{\today}

\begin{abstract} A notion of resolvent set for an operator acting in a rigged Hilbert space $\D \subset \H\subset \D^\times$ is proposed. This set depends on a family of intermediate locally convex spaces living between $\D$ and $\D^\times$, called interspaces. Some properties of the resolvent set and of the corresponding multivalued resolvent function are derived and some examples are discussed.

\end{abstract}

\maketitle

\section{Introduction}\label{sect_introd}

Spaces of linear maps acting on a {\em rigged Hilbert space} (RHS, for short)
$$
\D \subset \H \subset \D^\times
$$
have often been considered in the literature both from a pure mathematical point of view \cite{lass1, lass2, schmu, ctrev} and for their applications to quantum theories (generalized eigenvalues, resonances of Schr\"odinger operators, quantum fields...) \cite{ baum, costin, civitagadella, delamadrgadella, madrid, etfields, parravicini}.
The spaces of test functions and the distributions over them constitute relevant examples of rigged Hilbert spaces and operators acting on them  are a fundamental tool in several problems in Analysis (differential operators with singular coefficients, Fourier transforms) and also provide  the basic background for the study of the problem of the multiplication of
distributions by the duality method \cite{ober, agnese, tratschi2}.

Before going forth, we fix some notations and basic definitions.

Let $\D$ be a dense linear subspace of Hilbert space $\H$ and $t$ a locally convex topology on
$\D$, finer than the topology induced by the Hilbert norm. Then the space $\D^\times$  of all continuous conjugate linear functionals on
$\D[t]$, i.e., the conjugate dual of $\D[t]$, is a linear vector space and {\em contains} $\H$, in the sense that $\H$ can be
identified with a subspace of
$\D^\times$. These
identifications imply that the sesquilinear form $B( \cdot , \cdot )$ that puts $\D$ and $\D^\times$ in duality is an extension of the inner product of $\D$; i.e. $B(\xi, \eta) = \ip{\xi}{\eta}$, for every $\xi, \eta \in \D$ (to simplify notations we adopt the symbol $\ip{\cdot}{\cdot}$ for both of them). The space $\D^\times$ will always be
considered as endowed with the {\em strong dual topology} $t^\times= \beta(\D^\times,\D)$. The
Hilbert space $\H$ is dense in
$\D^\times[t^\times]$.

We get in this way a {\em Gelfand triplet} or {\em rigged Hilbert space} (RHS)
\begin{equation}\label{eq_rhs}
\D[t] \hookrightarrow  \H \hookrightarrow\D^\times[t^\times],
\end{equation}
where $\hookrightarrow $ denotes a continuous embedding with dense range. As it is usual, we will systematically read \eqref{eq_rhs} as a chain of inclusions and we will write $\D[t] \subset  \H \subset\D^\times[t^\times]$ or $(\D[t] , \H ,\D^\times[t^\times])$ for denoting a RHS.

Let $\LDD$ denote the vector space of all continuous linear maps from $\D[t]$ into  $\D^\times[t^\times]$. In $\LDD$ an involution $X \mapsto X\ad$ can be introduced by the equality
$$ \ip{X\xi}{ \eta}= \overline{\ip{X\ad \eta}{ \xi}}, \quad \forall \xi, \eta \in \D.$$  Hence $\LDD$ is a *-invariant vector space. As we shall see, $\LDD$ can be made into a partial *-algebra by selecting an appropriate family of intermediate spaces ({\em interspaces}) between $\D$ and $\D^\times$
and, for this reason, this paper is a continuation of the study on the spectral properties of locally convex quasi *-algebras or partial *-algebras on which several results of  a certain interest have been recently obtained, see e.g. \cite{ABT,ATTbo,ATTsp, betra, betraquasi, ctba}.

\medskip The problem we want to face in this paper is that of giving a reasonable notion of spectrum of an operator $X\in \LDD$; where {\em reasonable} means that it gives sufficient information on the behavior of the operator. Indeed, we propose a definition of resolvent set which is closely linked to the intermediate structure of {interspaces} that can be found between $\D$ and $\D^\times$, as it happens in many concrete examples: a spectral analysis can be performed each time we fix one of these families. Actually,
the definition of resolvent set we will give depends  on the choice of a family $\gF_0$ of interspaces. This is not a major problem if we take into account the problem that originated the spectral analysis of operators in Hilbert spaces. If, for instance, $A \in {\mc B}(\H)$ (the C*-algebra of bounded operators in Hilbert space $\H$), looking for the resolvent set of $A$ simply means looking for the $\lambda$'s in ${\mb C}$ for which the equation
$$ A\xi-\lambda \xi = \eta$$ has a unique solution $\xi\in \H$, for every choice of $\eta \in \H$, with $\xi$ depending continuously on $\eta$.

The same problem can be posed in the framework of rigged Hilbert spaces. For instance, between $\S(\mb R)$, the Schwartz space of rapidly decreasing $C^\infty$ functions, and $\S^\times(\mb R)$, the space of tempered distributions, live many classical families of spaces like Sobolev spaces, Besov spaces, Bessel potential spaces, etc. Let us call  $\gF_0$ one of these families. Then, finding solutions of the equation
$$ X\xi - \lambda \xi = \eta ,$$ with $X \in {\LG}(\S(\mb R),\S^\times(\mb R))$ should be intended in a more general sense: there exists a continuous extension of $X$ to a space $\E\in \gF_0$  where solutions of our equation do exist. The fact that $\xi \in \E$ means that the solution satisfies regularity conditions milder than those needed for $\xi$ to belong to $\S(\mb R)$.

Rigged Hilbert spaces are a relevant example of {\em partial inner product} ({\sc Pip-}) spaces \cite{jpact_book}. A {\sc Pip}-space $V$ is characterized by the fact that the inner product is defined only for {\em compatible pairs} of elements of the space $V$. It contains a complete lattice of subspaces (the so called {\em assaying subspaces}) fully determined by the compatibility relation. An assaying subspace is nothing but an interspace, in the terminology adopted here. The point of view here is however different: we start from a RHS and look for convenient families of interspaces for which certain properties are satisfied. Nevertheless, we believe that an analysis similar to that undertaken here
 could also be performed in the more general framework of {\sc Pip}-spaces,
but this problem will not be considered here.

The paper is organized as follows. In Section \ref{sect_preliminaries} we collect some basic facts on rigged Hilbert spaces and operators on them. In Section \ref{sect_resolvent} we introduce the resolvent and spectrum of an operator $X\in \LDD$. This definition, as announced before, depends on the choice of a family $\gF_0$ of interspaces living between $\D$ and $\D^\times$ and the crucial assumption is that the operator $X$ extends continuously to some of them.
Section \ref{sect_Hilbert} is devoted to elements of $\LDD$ that can be considered also as closable operators in Hilbert space.
In particular, we give an extension of Gelfand theorem on the existence of generalized eigenvectors of a symmetric operator $X\in \LDD$ having a self-adjoint extension in the Hilbert space $\H$. We will prove, under the assumptions that $\D= \D^\infty(A)$, where $A$ is a self-adjoint operator in $\H$ having a Hilbert-Schmidt inverse,  that the operator  $X$ has a complete set of generalized eigenvectors without requiring, as done in Gelfand theorem, that $X$ leaves $\D$ invariant. Moreover, it is shown that these generalized eigenvectors all belong to a certain element of the chain of Hilbert spaces generated by $A$. Finally, in Section \ref{sect_examples} we collect some examples.

\section{Notations and preliminaries}\label{sect_preliminaries}

For general aspects of the theory of \pa s and of their representations, we refer to the monograph
\cite{ait_book}.
For reader's convenience, however, we   repeat here the essential definitions.

A \pa\ $\A$ is a complex vector space with conjugate linear
involution  $\ha $ and a distributive partial multiplication
$\cdot$, defined on a subset $\Gamma \subset \A \times \A$,
satisfying the property that $(x,y)\in \Gamma$ if, and only if,
$(y\ha ,x\ha )\in   \Gamma$ and $(x\cdot y)\ha = y\ha \cdot x\ha $.
From now on, we will write simply $xy$ instead of $x\cdot y$ whenever
$(x,y)\in \Gamma$. For every $y \in \A$, the set of left (resp.
right) multipliers of $y$ is denoted by $L(y)$ (resp. $R(y)$), i.e.,
$L(y)=\{x\in \A:\, (x,y)\in \Gamma\}$, (resp. $R(y)=\{x\in \A:\, (y,x)\in \Gamma\}$). We denote by $L\A$ (resp.
$R\A$)  the space of universal left (resp. right) multipliers of
$\A$.
In general, a \pa\ is not associative.

The {\em unit} of  partial *-algebra $\A$, if any, is an element $e\in \A$ such that $e=e\ha$, $e\in R\A\cap L\A$ and $xe=ex=x$, for every $x\in \A$.

\medskip Let $\D[t]\subset \H \subset \D^\times[t^\times]$ be a RHS
and $\LDD$ the vector space of all continuous linear maps from $\D[t]$ into  $\D^\times[t^\times]$.

  To every $X \in \LDD$ there corresponds a separately continuous sesquilinear form $\theta_X$ on $\D \times \D$ defined by
\begin{equation}\label{theta}
\theta_X (\xi,\eta)= \ip{X\xi}{\eta}, \quad \xi ,\, \eta \in \D.
\end{equation}
 The space of all {\em jointly} continuous sesquilinear forms on $\D \times \D$ will be
 denoted with ${\sf B}(\D,\D)$.
We denote by $\LBDD{}$ the subspace of  all $X \in \LDD$ such that $\theta_X \in {\sf B}(\D,\D)$.

We denote by $\gl^\dagger(\D)$ the *-algebra consisting of all $X\in \LDD$ such that $X\D \subseteq \D$ and $X^\dag \D \subseteq \D$.

Let $\D[t] \subset \H \subset \D^\times[t^\times]$ be a rigged Hilbert space and $\E[t_\E]$ a locally convex space
such that
\begin{equation}\label{eq_interspace}\D[t] \hookrightarrow \E[t_\E] \hookrightarrow \D^\times[t^\times]. \end{equation} Let $\E^\times$ be the conjugate dual of $\E[t_\E]$  endowed with its own strong dual topology $t^\times_\E$.
Then by
duality, $\E^\times$ is continuously embedded in $\D^\times$ and the embedding has dense range. Also $\D$ is
continuously embedded in $\E$,  but in this case the image of $\D$ is not necessarily dense in $\E^\times$ \cite[Example 10.2.21]{ait_book},
unless $\E$ is endowed with the Mackey topology $\tau(\E,\E^\times)=: \tau_{\E}$; in which case we say that $\E$ is an {\em interspace}. If $\E, \cF$ are interspaces and $\E\subset \cF$, then $\tau_\cF$ is coarser than $\tau_\E$.

Let $\E$, $\cF$ be interspaces. Let us define
$$\mc{C}(\E, \cF):=\{ X\in \LDD:\, \exists Y\in  \gl(\E, \cF), \, Y\xi=X\xi, \forall \xi \in \D\},$$
where $\gl(\E, \cF)$ denotes the vector space of all continuous linear maps from $\E[\tau_\E]$ into $\cF[\tau_\cF]$.
It is clear that $X \in \mc{C}(\E, \cF)$ if and only if it has a continuous extension $X_\E:\E[\tau_\E] \to \cF[\tau_\cF]$. In particular, if $X\in \mc{C}(\E, \cF)$, then $X \in \mc{C}(\E, \D^\times)$. The continuous extension of $X$ from $\E$ into $\D^\times$ clearly coincides with $X_\E$. Obviously, if $X,Y\in \mc{C}(\E, \D^\times)$, then $(X+Y)_\E= X_\E+Y_\E$.

If $X\in \mc{C}(\E,\cF)$ then $X_\E \in \gl(\E, \cF)$, hence there exists a Mackey continuous linear map $X_\E^\ddag: \cF^\times \to \E^\times$ such that
$$ \ip{X_\E\xi}{\eta}= \overline{\ip{X_\E^\ddag \eta}{\xi}}, \quad \forall \xi \in \E,  \forall \eta \in \cF^\times.$$ Since the sesquilinear form which puts all pairs $(\E, \E^\times)$ in duality extends the inner product of $\D$, it follows that $X_\E^\ddag$ is an extension of $X\ad$ to $\cF^\times$ with values in $\E^\times$. Hence $X\ad \in \mc{C}(\cF^\times, \E^\times)$. Interchanging the roles of $X$ and $X\ad$ (and recalling that every interspace carries its Mackey topology) we obtain
\begin{equation}\label{eq_adjoint}
X\in \mc{C}(\E,\cF) \Leftrightarrow X\ad \in \mc{C}(\cF^\times, \E^\times).
\end{equation}

Moreover, a simple comparison of topologies shows that
if $\E, \cF$ are interspaces and $Y\in \gl(\E, \cF)$, then there exists $X \in \LDD$ such that $X=Y\upharpoonright_\D$.

\medskip
Let now $X,Y \in \LDD$ and assume  there exists an interspace $\E$ such
that $Y \in \gl(\D,\E)$ and $X \in \mc{C}(\E,\D^\times)$; it would then be natural to define
$$
X\cdot Y \xi = X_\E (Y\xi), \quad \xi \in \D.
$$
 However, this product is not   well-defined, because it may depend on the choice of the interspace $\E$.

 \bedefi
A family $\mathfrak F$ of interspaces in the rigged Hilbert space\
\linebreak
$\left(\D[t], \H, \D^\times[t^\times]\right)$  is called a {\em multiplication  framework} if

\begin{itemize}
\item[(i)]$\D \in \mathfrak F$;
\item[(ii)]
$\forall \,\E\in \mathfrak F$, its conjugate dual $\E^\times$ also belongs to $\mathfrak F$;
\item[(iii)]
$\forall\, \E,\cF \in \mathfrak F$, $\E\cap \cF \in \mathfrak F$.
 \end{itemize}
 \findefi

\bedefi
\label{def-10226}
Let ${\mathfrak F}$ be a multiplication framework in the rigged Hilbert space
$\left(\D[t], \H, \D^\times[t^\times]\right)$. The product $X \cdot Y$ of two elements of $\LDD$ is defined, with
respect to
${\mathfrak F}$, if there exist three interspaces $\E,\cF, {\mathcal G}\in {\mathfrak F}$ such that
$X\in  {\mathcal C}(\cF,{\mathcal G})$ and $Y\in  {\mathcal C}(\E,\cF)$. In this case, the multiplication $X
\cdot Y$ is defined by
$$
 X \cdot Y = \left( X_{\cF} Y_\E \right)\up \D
$$
or, equivalently, by
$$
 X \cdot Y \xi = X_{\cF}Y\xi, \quad \xi \in \D.
$$

\findefi

Actually, the product so defined does not depend on the particular choice of the interspaces $\E,\cF, {\mathcal G}\in
{\mathfrak F}$ but it may depend (and it does!) on the choice of ${\mathfrak F}$. For a more detailed analysis see \cite{ait_book,kurst1}.

As shown in \cite[Theorem 10.2.30]{ait_book}, we have
\begin{thm}
\label{theo-10230}
Let  ${\mathfrak F}$ be a  multiplication framework  in the  rigged Hilbert space\ $\left(\D[t], \H, \D^\times[t^\times]\right)$. Then
$\LDD$, with the multiplication defined above, is a (non-associative) partial *-algebra.
\end{thm}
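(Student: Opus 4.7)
My plan is to verify the three non-trivial axioms of a partial *-algebra for $\LDD$ equipped with the product of Definition~\ref{def-10226}: (a) the product $X\cdot Y$ is independent of the particular triple of interspaces chosen to realise it; (b) the domain $\Gamma$ of the product is stable under $(X,Y)\mapsto(Y\ad,X\ad)$, with $(X\cdot Y)\ad = Y\ad\cdot X\ad$; and (c) the product is distributive in each argument. The underlying vector-space structure and the involution on $\LDD$ are already in place from the preliminaries, and associativity is explicitly not claimed.

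The core technical step is the well-definedness in (a). If $X\cdot Y$ is realised both via $(\E,\cF,\G)$ and via $(\E',\cF',\G')$ in $\mathfrak{F}$, condition (iii) of a multiplication framework yields $\cF\cap\cF'\in\mathfrak{F}$, so $\cF\cap\cF'$ is itself an interspace and $\D$ is Mackey-dense in it. The extensions $X_\cF$ and $X_{\cF'}$ both restrict to continuous maps $\cF\cap\cF'\to\D^\times$ that agree with $X$ on $\D$; they therefore coincide on $\cF\cap\cF'$. Since $Y\xi\in\cF\cap\cF'$ for every $\xi\in\D$, the two candidate values of $(X\cdot Y)\xi$ are equal. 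This density argument is the crux of the proof, and it is precisely what motivates the closure under intersections in the definition of a multiplication framework.

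For (b), I invoke \eqref{eq_adjoint} to obtain $X\ad\in\mc{C}(\G^\times,\cF^\times)$ and $Y\ad\in\mc{C}(\cF^\times,\E^\times)$; property (ii) of the framework places $\G^\times,\cF^\times,\E^\times$ in $\mathfrak{F}$, so the triple $(\G^\times,\cF^\times,\E^\times)$ certifies that $(Y\ad,X\ad)\in\Gamma$. To identify $(X\cdot Y)\ad$ with $Y\ad\cdot X\ad$ I would chain the adjoint identities
\[
\ip{X_\cF\zeta}{\eta}=\overline{\ip{(X\ad)_{\G^\times}\eta}{\zeta}},\qquad \ip{Y_\E\xi}{\eta'}=\overline{\ip{(Y\ad)_{\cF^\times}\eta'}{\xi}},
\]
taken with $\zeta=Y\xi\in\cF$, $\eta'=(X\ad)_{\G^\times}\eta\in\cF^\times$ and $\xi,\eta\in\D$, and then appeal to the uniqueness of the adjoint of an element of $\LDD$.

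Finally, for the distributivity (c), given $X_i\cdot Y$ defined via $(\E_i,\cF_i,\G_i)$ for $i=1,2$, I set $\E:=\E_1\cap\E_2$ and $\cF:=\cF_1\cap\cF_2$, both in $\mathfrak{F}$ by (iii). Arguing as in the well-definedness step, $Y$ extends continuously to a map $\E\to\cF$ (the two candidate extensions agree on the dense subspace $\D$), and each $X_i$ restricts from $\cF_i$ to a continuous map $\cF\to\D^\times$. Since $\D^\times\in\mathfrak{F}$ by (i) and (ii), the triple $(\E,\cF,\D^\times)$ witnesses that $(X_1+X_2)\cdot Y$ is defined and equals $X_1\cdot Y+X_2\cdot Y$; the case $X\cdot(Y_1+Y_2)$ is entirely symmetric via (ii). The whole argument ultimately reduces to the density-plus-continuity principle of step (a), so that is the step where all the work is concentrated.
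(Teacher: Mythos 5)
Your argument is correct in substance; note that the paper itself offers no proof of this theorem but simply defers to \cite[Theorem 10.2.30]{ait_book}, and what you have written is essentially a reconstruction of that standard argument. The crux is exactly where you place it: $\cF\cap\cF'$ belongs to $\gF$ and is therefore itself an interspace, so $\D$ is Mackey-dense in it, and since $\tau_\cF$ restricted to $\cF\cap\cF'$ is coarser than $\tau_{\cF\cap\cF'}$ (a fact recorded in Section \ref{sect_preliminaries}), the restrictions of $X_\cF$ and $X_{\cF'}$ are continuous into $\D^\times$ and agree on the dense subspace $\D$, hence everywhere; your treatment of the involution via \eqref{eq_adjoint} and of $(X_1+X_2)\cdot Y$ via the witnessing triple $(\E_1\cap\E_2,\,\cF_1\cap\cF_2,\,\D^\times)$ is likewise sound. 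The one point where you are too quick is the claim that $X\cdot(Y_1+Y_2)$ is ``entirely symmetric'': the literal mirror of your argument would require the \emph{sum} $\cF_1+\cF_2$ to serve as the intermediate space, and $\gF$ is closed only under intersections and duals, not sums. The repair --- presumably what you intend by ``via (ii)'' --- is to pass to adjoints: apply your additivity-in-the-left-factor step to the left factors $Y_1\ad\in\mc{C}(\cF_1^\times,\E_1^\times)$, $Y_2\ad\in\mc{C}(\cF_2^\times,\E_2^\times)$ and the right factor $X\ad\in\mc{C}(\G_i^\times,\cF_i^\times)$, with witness $(\G_1^\times\cap\G_2^\times,\,\cF_1^\times\cap\cF_2^\times,\,\D^\times)$, to conclude that $(Y_1\ad+Y_2\ad)\cdot X\ad=(Y_1+Y_2)\ad\cdot X\ad$ is defined and additive; then the already-established identity $(X\cdot Y)\ad=Y\ad\cdot X\ad$, together with the injectivity and additivity of the involution, transports both definedness and the distributive identity back to $X\cdot(Y_1+Y_2)$. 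With that step made explicit the proof is complete.
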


\section{Inverses and resolvents} \label{sect_resolvent}
Let us consider an element $X\in \LDD$. We denote by $\mc{R}(X)$ the range of $X$.
If  $X$ is injective and $\mc{R}(X)=\D^\times$, then there exists a linear map $X^{-1}: \D^\times \to \D$ such that $XX^{-1}=I_{\D^\times}$ and $X^{-1}X=I_{\D}$.
If $X^{-1}$ is continuous, then its restriction to $\D$ is a bounded operator and it leaves $\D$ invariant.
Conditions for the continuity of $X^{-1}$ are given in \cite[Sect. 38]{koethe}.

The next proposition shows that, even though it is natural, the notion of invertibility considered above is too restrictive.
\begin{prop}\label{prop_nogo} Let $\D[t] \hookrightarrow \H \hookrightarrow \D^\times[t^\times]$ be a rigged Hilbert space and $X\in \LDD$ a linear bijection.
Then there exists a triplet of Hilbert spaces $\H_X \hookrightarrow \H \hookrightarrow \H_X^\times$ such that $\D \subseteq \H_X$ and $\D^\times \subseteq \H_X^\times$.
\end{prop}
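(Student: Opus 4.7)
My plan is to use the bijection $X$, together with the parallel bijectivity of $X^\dag$ provided by~\eqref{eq_adjoint}, to transport the Hilbert inner product of $\H$ to a sesquilinear form on $\D^\times$, and then to extract $\H_X$ from $\H_X^\times$ by Riesz duality. First, I would invoke the sufficient conditions in~\cite[Sect.~38]{koethe} mentioned just before the proposition to ensure $X^{-1}\colon\D^\times\to\D$ is continuous. Combined with the continuous chain $\D\hookrightarrow\H\hookrightarrow\D^\times$, the restriction $T:=X^{-1}\restr{\H}$ is then a bounded injective operator on $\H$ with dense range (dense because $T(\D^\times)=\D$ is dense in $\H$).

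Next I would introduce on $\D^\times$ the positive-definite sesquilinear form
$$\langle\phi,\psi\rangle_{-}:=\langle X^{-1}\phi,X^{-1}\psi\rangle_\H,$$
whose Hilbert-space completion $\H_X^\times$ satisfies $\D^\times\subseteq\H_X^\times$ densely by construction. The estimate $\|h\|_{-}=\|Th\|_\H\leq\|T\|\,\|h\|_\H$ for $h\in\H$ delivers the continuous dense embedding $\H\hookrightarrow\H_X^\times$. Taking $\H_X$ to be the Hilbert conjugate dual of $\H_X^\times$ under the pairing that extends the $\H$-inner product then produces automatically the Hilbert triplet $\H_X\hookrightarrow\H\hookrightarrow\H_X^\times$, and the inclusion $\D^\times\subseteq\H_X^\times$ is built in.

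The hard part is proving $\D\subseteq\H_X$. For $\xi\in\D$, membership in $\H_X$ is equivalent to the estimate $|\langle\phi,\xi\rangle_\H|\leq C_\xi\|\phi\|_{-}$ on $\H$; writing $\phi=X\eta$ with $\eta=X^{-1}\phi$ and invoking the involution $\langle X\eta,\xi\rangle=\overline{\langle X^\dag\xi,\eta\rangle}$, this reduces to $|\langle X^\dag\xi,\eta\rangle|\leq C_\xi\|\eta\|_\H$ on $\eta\in X^{-1}(\H)$. Since $X^\dag\xi$ lies a priori only in $\D^\times$, not in $\H$, the bound fails with the asymmetric choice alone, and one must symmetrise. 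Using that $X^\dag$ is itself a bijection $\D\to\D^\times$ (hence $(X^\dag)^{-1}$ is also continuous), I would enrich the form to
$$\|\phi\|_{-}^{2}:=\|X^{-1}\phi\|_{\H}^{2}+\|(X^\dag)^{-1}\phi\|_{\H}^{2},$$
which balances the roles of $X$ and $X^\dag$ and allows the bound to be derived from either factorisation of $\phi$. I expect this symmetrisation, along with checking that the resulting inner product still defines the correct dual Hilbert space for the triplet, is where the bulk of the technical work lies.
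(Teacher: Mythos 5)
Your construction is, at bottom, the paper's: the form \eqref{eq_ip}, the completion $\H_X^\times$ of $\D^\times$ containing $\D^\times$ by construction, and the conjugate dual $\H_X$ taken with respect to the $\H$--pairing. But your opening move --- invoking K\"othe to ``ensure'' that $X^{-1}\colon\D^\times\to\D$ is continuous --- is not available. The paper cites \cite[Sect. 38]{koethe} precisely because continuity of $X^{-1}$ is \emph{not} automatic; those are sufficient conditions which an arbitrary bijection $X\in\LDD$ need not satisfy, and the proposition assumes only bijectivity. The paper's own proof never uses continuity of $X^{-1}$: it observes that, by the very definition of $\|\cdot\|_X$, the map $X$ is an isometry of $\D[\|\cdot\|]$ onto $\D^\times[\|\cdot\|_X]$, hence extends to a unitary of $\H$ onto the completion $\H_X^\times$. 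The same objection hits your appeal to the ``parallel bijectivity of $X^\dag$'': \eqref{eq_adjoint} is a statement about continuous extendability to interspaces and does not give that $X^\dag\colon\D\to\D^\times$ is onto, nor that $(X^\dag)^{-1}$ is continuous (surjectivity of $X^\dag$ would itself require continuity of $X^{-1}$ together with reflexivity of $\D$). So both pillars of your argument rest on hypotheses that are not in the statement.

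The obstruction you isolate for $\D\subseteq\H_X$ --- that $X^\dag\xi$ lives a priori only in $\D^\times$ --- is a genuine and well-spotted subtlety which the paper's proof passes over in one line; unfortunately your symmetrisation does not remove it. Factorising $\phi=X\eta_1$ gives $\ip{\phi}{\xi}=\overline{\ip{X^\dag\xi}{\eta_1}}$, and bounding this by $\|\eta_1\|_\H$ requires $X^\dag\xi\in\H$; factorising $\phi=X^\dag\eta_2$ gives $\ip{\phi}{\xi}=\overline{\ip{X\xi}{\eta_2}}$, which requires $X\xi\in\H$ --- the mirror image of the obstruction you started with. Enlarging the norm to $\|X^{-1}\phi\|^2_\H+\|(X^\dag)^{-1}\phi\|^2_\H$ only replaces the condition ``$X^\dag\xi\in\H$'' by ``$X\xi\in\H$ or $X^\dag\xi\in\H$'' (or, at best, a decomposition of $\xi$ into two pieces satisfying one condition each), and none of these is part of the hypotheses. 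So the step you yourself flag as the hard part remains unproved in your proposal, and the route you choose for it does not go through.
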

\begin{proof} Since $X$ is a bijection, the linear operator $X^{-1}: \D^\times \to \D$ is well-defined. Then we can introduce an inner product on $\D^\times$ by
\begin{equation}\label{eq_ip} \ip{\zeta}{\zeta'}_X= \ip{X^{-1}\zeta}{X^{-1}\zeta'}, \quad \zeta, \zeta'\in \D^\times.\end{equation}
By \eqref{eq_ip} it follows easily that $X$ is an isometry of $\D[\|\cdot\|]$ onto $\D^\times [\|\cdot\|_X]$ so it extends to a unitary operator of $\H$ onto the Hilbert space completion $\H_X^\times$ of $\D^\times [\|\cdot\|_X]$. Since the embedding of $\H$ into $\H_X^\times$ is continuous, the conjugate dual $(\H_X^\times)^\times=:\H_X$ of $\H_X^\times$, with respect to the inner product of $\H$ contains $\D$ as dense subspace.
\end{proof}

\medskip Since the existence of global inverses of operators of $\LDD$ is a so strong condition, one may try to exploit the intermediate structure between $\D$ and $\D^\times$ for a more appropriate definition of the inversion procedure.

The fact that, once fixed a
multiplication framework $\gF$, $\LDD$ becomes a partial *-algebra [Theorem \ref{theo-10230}] suggests, say, an {\em algebraic} definition: $Y \in \LDD$ is the inverse of $X\in \LDD$ if \begin{equation}\label{cond_five}\mbox{$X\cdot Y$ and $Y\cdot X$ are well-defined and $X\cdot Y\xi=Y\cdot X\xi=\xi$, $\forall\xi\in\D$}. \end{equation} This equality, however, does not define $Y$ uniquely, because of possible lack of associativity.

\smallskip Let $X\in \LDD$ and $\gF_0$ a family of interspaces.
Assume that there exist $\E, \cF\in \gF_0$ such that $X \in \mc{C}(\E,\cF)$. If the extension $X_\E$ is bijective from $\E$ into $\cF$, then $X_\E^{-1}$ exists. If $X_\E^{-1}$ is continuous from $\cF$ onto $\E$, then its restriction to $\D$ is automatically continuous from $\D[t]$ into $\D^\times[t^\times]$; i.e. $X_\E^{-1}\upharpoonright_\D \in \LDD$ and, moreover, $X_\E^{-1}\upharpoonright_\D\in \mc{C}(\cF,\E)$.
If this is the case, and if $\gF_0$ is a multiplication framework, then \eqref{cond_five} holds. So that $X_\E^{-1}\upharpoonright_\D$ is the  algebraic inverse of $X$. The converse may fail to be true. For this reason there is no need, in what follows, to consider $\gF_0$ as a multiplication framework.

\berem \label{rem_inverses}By \eqref{eq_adjoint} we know that $X\in \mc{C}(\E,\cF)$ if, and only if, $X\ad \in \mc{C}(\cF^\times, \E^\times)$; we denote by $X\ad_{\cF^\times}$ the continuous extension of $X\ad$ to $\cF^\times$. Assume in particular that $\E, \cF$ are Hilbert spaces and $X \in  \mc{C}(\E,\cF)$. Then, if $X_\E$ has a continuous inverse,  $X^\dag_{\cF^\times}$ also has  a continuous inverse and $((X_\E)^{-1})^\dag=(X\ad_{\cF^\times})^{-1}$.
\enrem

\berem Assume that there exists a second pair $\E', \cF'$, such that $X\in \mc{C}(\E', \cF')$ and $X_{\E'}$ has a continuous inverse $X_{\E'}^{-1}$ from $\cF'$ onto $\E'$. If $\gF_0$ is a multiplication framework, then $\E\cap \E'\in \gF_0$ and $\D$ is dense in $\E\cap \E'$ with the projective topology \cite[Proposition 10.2.24]{ait_book}. But $X_\E^{-1}$ and $X_{\E'}^{-1}$ need not have the same restriction to $\D$, unless $\L:=X_\E(\E\cap \E')$ is an interspace of $\gF_0$ and $X \in \mc{C}(\E\cap \E', \L)$.
\enrem

\bedefi \label{defn_geneigenvalues1}Let $X \in \LDD$ and $\lambda \in {\mb C}$.
We say that $\lambda$ is a {\em generalized eigenvalue} of $X$ if there exists an interspace $\E$
such that $X$ has a continuous extension $X_\E$ from $\E[t_\E]$ into $\D^\times[t^\times]$ and $X_\E-\lambda I_\E$ is not injective.  Any nonzero vector $\xi\in N(X_\E-\lambda I_\E)\subset \E$ is called a {\em generalized eigenvector}. If $\E=\D$ we say that $\lambda$ is an eigenvalue of $X$ and elements of $N(X_\D-\lambda I_\D)$ are called eigenvectors.
\findefi

\berem \label{rem_globinverse}If $X-\lambda I_\D$ has a {\em global} inverse $(X-\lambda I_\D)^{-1}:\D^\times \to \D$ and $X\in \mc{C}(\E, \cF)$, for some $\E, \cF\in \gF$, with $\D\subsetneqq \E$, then there exists $\zeta \in \E\setminus\{0\}$ such that $(X_\E-\lambda I_\E)\zeta =0$. Indeed, let $\xi'\in \E\setminus \D$, then,  $(X_\E -\lambda I_\E) \xi'=(X-\lambda I_\D)\xi$ for a unique $\xi\in \D$. Hence $(X_\E -\lambda I_\E)(\xi'-\xi)=0$, with $\xi'-\xi\neq 0$. This implies that $\lambda$ is (also) a generalized eigenvalue.
\enrem

\beex\label{op p} Let $\S:=\S(\mb R)$ be the Schwartz space of rapidly decreasing $C^\infty$ functions and $\S^\times:=\S^\times(\mb R)$ its conjugate dual (tempered distributions) and consider the very familiar example of the operator $p=-i d/dx \in \LSS$. It is clear that $p$ has no eigenvalues in $\S$. But, since $p \in \gl^\dagger(\S)$, has a continuous extension $\widetilde{p}$ to the whole of $\S^\times$. For every $\lambda\in {\mb R}$, the function $f_\lambda (x)= e^{i\lambda x}$ is an  eigenvector of $\widetilde{p}$ corresponding to the eigenvalue $\lambda$. Thus, every real number $\lambda$ is a generalized eigenvalue of $p$.

This is just an example of a more general situation: every symmetric operator $A$ on a nuclear rigged Hilbert space $\D \subset \H\subset \D^\times$, having a self-adjoint extension to $\H$ and invariant in $\D$ possesses a \emph{complete} set of generalized eigenvectors \cite[Ch.IV, Sect.5, Theorem 1]{gelfand}. We will come back to this point in Section \ref{sect_Hilbert}.
\enex

\bedefi Let $X \in \LDD$ and $\gF_0$ be a family of interspaces.
The {\em $\gF_0$-resolvent set} of $X$, $\varrho^{\gF_0}(X)$, consists of the set of complex numbers $\lambda$ satisfying the following conditions:

there exist $\E, \cF\in \gF_0$, with $\E\subseteq \cF$, such that
\begin{itemize}
\item[(i.1)] $X\in \mc{C}(\E, \cF)$ and $(X_\E-\lambda I_\E )\E=\cF$;
\item[(i.2)] $(X_\E-\lambda I_\E )^{-1}$ exists and it is continuous from $\cF[\tau_\cF]$ onto $\E[\tau_\E]$.
\end{itemize}
The set $\sigma^{\gF_0}(X):= {\mb C}\setminus \varrho^{\gF_0}(X)$ will be called the \em{$\gF_0$-spectrum} of $X$.

\findefi

\berem The assumption of continuity in condition (i.2) can be omitted if we suppose that $\E, \cF$ are Banach spaces; in this case, in fact, the inverse mapping theorem guarantees the continuity of $(X_\E-\lambda I_\E )^{-1}$. \enrem

{\berem If the topology of $\D$ is equivalent to the initial Hilbert norm, then $\D^\times=\H$ and, in this case, $X\in \LDD$ if and only if $\overline{X}\in {\B(\H)}$. The unique possible interspace is $\H$ itself. Hence the spectrum of $X$ coincides with the usual spectrum defined in ${\B(\H)}$.
\enrem}

A crucial point in the previous definition is that there could exist different couples of interspaces for which the requirements of our definition are true. This point requires a careful analysis, which will be performed later.

\bigskip
In order to study the $\gF_0$-resolvent of an operator $X\in \LDD$ it is useful to introduce the notion of {\em regular point} of $X$, in analogy with what is usually done for operators in Hilbert spaces.
We will do this by supposing that $\gF_0$ is a family of interspaces whose elements are Hilbert spaces. In this case, we will prefer the notation ${\mc B}(\E, \cF)$ to $\gl(\E, \cF)$ as a remainder of the fact that its elements are bounded operators from $\E$ into $\cF$. The norm of the Banach space ${\mc B}(\E, \cF)$ will be denoted by $\|\cdot\|_{\E,\cF}$.

\medskip
Let now $\E, \cF \in \gF_0$ and $X \in {\mc B}(\E, \cF)$. The inverse of $X$, when it exists, is, clearly, the unique $Y\in {\mc B}(\cF, \E)$ such that
$$ XY=I_{\cF}, \qquad YX=I_\E.$$
 The set of all invertible elements of ${\mc B}(\E, \cF)$ will be denoted by $G({\mc B}(\E, \cF))$.

\bedefi
A complex number $\lambda$ is called a $\gF_0$-regular point for $X\in\LDD$ if there exist $\E$, $\cF\in \mathfrak{F}_0$, with $\E \subseteq \cF$, and $c_\lambda$, $d_\lambda\in {\mb R}^+$ such that:
\begin{equation} \label{eq_one} c_\lambda \|\xi\| _\E \leq \|(X-\lambda I)\xi\|_\cF \leq d_\lambda \|\xi\| _\E , \qquad \forall \xi\in \D.\end{equation}

The set of regular points of $X$ will be denoted by $\pi(X)$. \findefi
Clearly, $\lambda \in \pi(X)$ if and only if there exist $\E$, $\cF\in \mathfrak{F}_0$, such that $(X-\lambda I)\in \mathcal{C}(\E, \cF)$ and both $(X-\lambda I)$ and its extension $(X_\E-\lambda I_\E)$ to $\E$ are injective.

If $X \in \mathcal{C}(\E, \cF)$, we denote by $\pi^{\E, \cF}(X)$ the set of $\lambda \in {\mb C}$ for which \eqref{eq_one} holds, with $\E, \cF$ fixed.
It is clear that  $\lambda \in \pi^{\E, \cF}(X)$ if, and only if, the extension $X_\E-\lambda I_\E$ of $X-\lambda I$ to $\E$ satisfies
\begin{equation} \label{eq_oneone} c_\lambda \|\xi\| _\E \leq \|(X_\E-\lambda I_\E)\xi\|_\cF \leq d_\lambda \|\xi\| _\E,  \qquad \forall \xi\in \E.\end{equation}

\bedefi
Let $X\in\LDD$ and $\lambda\in \pi^{\E, \cF}(X)$. We call $d_\lambda ^{\,\E, \cF}(X) := {\rm dim\,}\mc{R}(X_\E-\lambda I_\E)^\perp$ (the orthogonal being taken in $\cF$) the {\em defect number} of $X$ at $\lambda$ w. r. to $\E,\cF$.
\findefi
Let $X \in \mc{C}(\E,\cF)$. Then, when $\cF'$ runs over $\mathfrak{F}_0$, we have:
\begin{itemize}
\item if $\cF\subset \cF ^{'} \Rightarrow d_\lambda ^{\E, \cF}(X)\leq d_\lambda ^{\E, \cF^{'}}(X)$;
\item if $\cF^{'}\subset \cF $ and ${\mc R}(X_\E -\lambda I_{\E})\subset \cF{'} \Rightarrow d_\lambda ^{\E, \cF}(X)\geq d_\lambda ^{\E, \cF^{'}}(X)$;
\item if $\cF$ and $\cF'$ are not comparable, then there is no {\em a priori} relation between the corresponding defect numbers.
\end{itemize}
On the other hand, if $\E,\cF$ are fixed and $\E'$ runs over $\mathfrak{F}_0$,
\begin{itemize}
\item if $\E'\subset\E \Rightarrow d_\lambda ^{\E, \cF}(X)\leq d_\lambda ^{\E^{'}, \cF}(X)$;
\item if $\E\subset{\E^{'}}$ and  ${\mc R}(X_{\E'}-\lambda I_{\E'})\subset \cF \Rightarrow d_\lambda ^{\E, \cF}(X)\geq d_\lambda ^{\E^{'}, \cF} (X)$;
\item if $\E$ and $\E'$ are not comparable, then there is no {\em a priori} relation between the corresponding defect numbers.

\end{itemize}
\begin{prop} \label{propos} Let $\E, \cF \in \gF_0$, $X\in{\mc C}(\E, \cF)$ and $\lambda\in\CN$.\\
(i) $\lambda\in\pi^{\E, \cF}(X)$  if and only if $(X_\E -\lambda I_\E)$ has a bounded inverse $(X_\E -\lambda I_\E)^{-1}$ defined on ${\mc R}(X_\E -\lambda I_\E)\subseteq\cF$.\\
(ii)
${\mc R}( X_\E-\lambda I_\E) = \overline{{\mc R}(X-\lambda I)}$, for each $\lambda\in \pi_{\E, \cF} (X)$.\\
(iii) If $\lambda\in \pi^{\E,\cF}(X)$, then ${\mc R}(X-\lambda I)$ is closed in $\cF$.
\end{prop}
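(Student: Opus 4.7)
The plan is to read the double inequality \eqref{eq_oneone} as the standard Banach-space statement that $X_\E-\lambda I_\E$ is a topological embedding of $\E$ into $\cF$: the upper bound is just continuity, while the lower bound says the operator is \emph{bounded below}, which is equivalent to being injective with continuous inverse on its range. All three parts of the proposition then flow from this reading, together with the Mackey density of $\D$ in every interspace.

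For part (i), the hypothesis $X\in \mc{C}(\E,\cF)$ provides a continuous extension $X_\E\in \mc{B}(\E,\cF)$; since $\E\hookrightarrow\cF$ continuously (the Mackey topology of $\E$ is finer than the one induced by $\cF$), $\lambda I_\E$ is also continuous from $\E$ to $\cF$, and hence $X_\E-\lambda I_\E\in \mc{B}(\E,\cF)$. This automatically supplies the upper bound in \eqref{eq_oneone} with $d_\lambda=\|X_\E-\lambda I_\E\|_{\E,\cF}$. The nontrivial content of $\lambda\in\pi^{\E,\cF}(X)$ is therefore the lower bound on $\D$, and by density of $\D$ in $\E$ combined with the continuity just noted, this lower bound on $\D$ is equivalent to the corresponding bound on the whole of $\E$. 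The latter is the standard characterization of $X_\E-\lambda I_\E$ being injective and admitting a bounded inverse on its range with $\|(X_\E-\lambda I_\E)^{-1}\|\leq c_\lambda^{-1}$.

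For (ii), I would first use the lower bound to show that $\mc{R}(X_\E-\lambda I_\E)$ is closed in $\cF$: if $(X_\E-\lambda I_\E)\xi_n\to y$ in $\cF$, the lower bound forces $(\xi_n)$ to be Cauchy in the Hilbert space $\E$, hence $\xi_n\to\xi$ in $\E$, and continuity of the extension yields $y=(X_\E-\lambda I_\E)\xi$. Density of $\D$ in $\E$ combined with continuity of $X_\E-\lambda I_\E$ gives that $(X-\lambda I)\D$ is dense in $(X_\E-\lambda I_\E)\E$ in the topology of $\cF$; together with the closedness just established this produces
\[
\mc{R}(X_\E-\lambda I_\E)=\overline{(X_\E-\lambda I_\E)\D}^{\,\cF}=\overline{\mc{R}(X-\lambda I)}^{\,\cF},
\]
which is exactly \emph{(ii)}. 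Part (iii) then comes for free: by (ii) the $\cF$-closure of $\mc{R}(X-\lambda I)$ coincides with $\mc{R}(X_\E-\lambda I_\E)$, which has just been shown to be a closed subspace of $\cF$.

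The only step that is more than routine book-keeping is the bounded-below $\Rightarrow$ closed-range argument used in (ii), and in the Hilbert setting fixed by $\gF_0$ this is a standard Cauchy-sequence argument, so I foresee no genuine obstacle. The subtlety worth flagging is that the density of $\D$ in each $\E\in\gF_0$, used both to extend the lower bound from $\D$ to $\E$ and in the density argument for (ii), is exactly what is ensured by requiring every interspace to carry its Mackey topology.
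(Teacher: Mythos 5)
Your proof is correct and follows essentially the same route as the paper's: part (i) is read off from the extended inequality \eqref{eq_oneone}, and part (ii) rests on the same Cauchy-sequence argument (the lower bound forces approximating sequences to be Cauchy in the complete space $\E$, whence the range of the extension is the $\cF$-closure of ${\mc R}(X-\lambda I)$), with (iii) immediate. The only difference is organizational — you first establish closedness of ${\mc R}(X_\E-\lambda I_\E)$ and then invoke density of $(X-\lambda I)\D$, whereas the paper runs the two steps together on sequences from $\D$ — which is not a substantive divergence.
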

\begin{proof} (i) follows easily from \eqref{eq_oneone}.\\
(ii) Let $\eta$ be in the closure of ${\mc R}(X-\lambda I)$ in $\cF$. Then there is a sequence $(\xi_{n})_{n\in {\mb N}}$ of vectors $\xi _n\in \D$ such that $\eta_n = (X-\lambda I)\xi_n \rightarrow \eta $ in $\cF$. We have
$$\|\xi_n - \xi_m\|_\E\leq c_\lambda ^{-1} \|(X-\lambda I) (\xi_n - \xi_m)\|_\cF = \|\eta_n - \eta_m\|_\cF  .$$
Hence $(\xi_n)$ is a Cauchy sequence in $\E$ and so it is convergent. Let $\xi=\lim_{n\to\infty} \xi_n$. Then $\ X \xi_n = \eta_n + \lambda \xi_n \to \eta + \lambda \xi $. By the definition of $X_\E$ it follows that $X_\E \xi = \eta + \lambda \xi$, so that $\eta= (X_\E-\lambda I_\E)\xi\in {\mc R}(X_\E-\lambda I_\E)$. Thus $\overline{{\mc R}(X-\lambda I)} \subseteq {\mc R}(X_\E-\lambda I_\E)$. Using once more the definition of $X_\E$, it follows easily that ${\mc R}(X_\E-\lambda I_\E)\subseteq \overline{{\mc R}(X-\lambda I)}$ and, thus, the equality holds.
\\
(iii): It follows from (ii).
\end{proof}

\begin{prop} The set of the regular points of $X\in \LDD$ is an open subset of $\CN$.

\end{prop}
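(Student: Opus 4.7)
\medskip
\noindent\textbf{Proof plan.} Fix $\lambda_0 \in \pi(X)$. By definition there exist Hilbert interspaces $\E,\cF \in \gF_0$ with $\E \subseteq \cF$, and constants $c_{\lambda_0},d_{\lambda_0}>0$, such that
$$ c_{\lambda_0} \|\xi\|_\E \leq \|(X-\lambda_0 I)\xi\|_\cF \leq d_{\lambda_0} \|\xi\|_\E, \qquad \forall \xi \in \D. $$
The plan is to show that the \emph{same} pair $(\E,\cF)$ witnesses regularity for every $\lambda$ in a small disk around $\lambda_0$, by a standard perturbation of the double inequality.

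First, since $\E$ and $\cF$ are interspaces with $\E \subseteq \cF$, the topology $\tau_\cF$ is coarser than $\tau_\E$ (as noted in Section \ref{sect_preliminaries}), so the inclusion $\E \hookrightarrow \cF$ is continuous. As both are Hilbert spaces, there exists $M>0$ such that
$$ \|\xi\|_\cF \leq M \|\xi\|_\E, \qquad \forall \xi \in \E. $$
Then for every $\lambda \in \mathbb{C}$ and every $\xi \in \D$, the triangle inequality gives
$$ \|(X-\lambda I)\xi\|_\cF \leq \|(X-\lambda_0 I)\xi\|_\cF + |\lambda-\lambda_0|\,\|\xi\|_\cF \leq \bigl(d_{\lambda_0}+M|\lambda-\lambda_0|\bigr)\|\xi\|_\E, $$
and similarly
$$ \|(X-\lambda I)\xi\|_\cF \geq \|(X-\lambda_0 I)\xi\|_\cF - |\lambda-\lambda_0|\,\|\xi\|_\cF \geq \bigl(c_{\lambda_0}-M|\lambda-\lambda_0|\bigr)\|\xi\|_\E. $$

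Hence, as soon as $|\lambda-\lambda_0|<c_{\lambda_0}/M$, the lower coefficient is strictly positive, and setting $c_\lambda := c_{\lambda_0}-M|\lambda-\lambda_0|$ and $d_\lambda := d_{\lambda_0}+M|\lambda-\lambda_0|$ we obtain \eqref{eq_one} for $\lambda$ with respect to the same interspaces $\E,\cF$. Thus the open disk $\{\lambda : |\lambda-\lambda_0|<c_{\lambda_0}/M\}$ lies in $\pi(X)$, proving that $\pi(X)$ is open.

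The only delicate point is the continuity of the inclusion $\E \hookrightarrow \cF$, which gives the constant $M$; once that is in hand the argument is essentially the same as the classical proof for Hilbert space operators. Note that we never need to change the pair $(\E,\cF)$ when perturbing $\lambda$, which is why the openness holds without any further hypothesis on the family $\gF_0$.
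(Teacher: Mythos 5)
Your proof is correct and follows essentially the same route as the paper's: fix the witnessing pair $(\E,\cF)$ for $\lambda_0$ and perturb the two-sided inequality \eqref{eq_one} by the triangle inequality to cover a disk around $\lambda_0$. The only difference is that you make the norm $M$ of the embedding $\E\hookrightarrow\cF$ explicit (disk of radius $c_{\lambda_0}/M$), whereas the paper tacitly uses $\|\xi\|_\cF\leq\|\xi\|_\E$ and takes radius $c_{\lambda_0}$; your version is the slightly more careful one.
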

\begin{proof} Let $\lambda _0 \in \pi (X)$. Then there exist $\E,\cF \in \gF_0$, $\E \subseteq \cF$, such that $\lambda_0 \in \pi^{\E, \cF} (X)$. Let $\lambda \in \CN$ and suppose that $ |\lambda - \lambda _0| < c_{\lambda _0}$. Then for $\xi\in\D$, we have, on one hand,
$$\|(X-\lambda I)\xi\|_\cF \geq  \|(X-\lambda_0 I)\xi\|_\cF - |\lambda - \lambda_0| \|\xi\|_\E \geq (c_{\lambda_0} - |\lambda - \lambda_0| ) \|\xi\|_\E. $$
On the other hand
\begin{align*}
\|(X-\lambda I)\xi\|_\cF &\leq \|(X-\lambda_0 I)\xi\|_\cF +|\lambda - \lambda_0|\|\xi\|_\cF \\
&\leq \|(X-\lambda_0 I)\xi\|_\cF + c_{\lambda_0}\|\xi\|_\E \leq (d_{\lambda_0}+c_{\lambda_0})\|\xi\|_\E.
\end{align*}
Hence $\lambda \in \pi^{\E, \cF} (X)$. This in turn implies that $\pi (X)$ is open.
\end{proof}
With minor modifications of a classical argument (see, e.g. \cite[Proposition 2.4]{schmu2}) one obtains the following
\begin{prop}
Let $X\in \mc{C}(\E, \cF)$. Then the defect number $d_\lambda ^{\E, \cF}(X)$ is constant on each connected component of the open set $\pi^{\E,\cF} (X)$.
\end{prop}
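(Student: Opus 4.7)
The plan is to prove that $\lambda \mapsto d_\lambda^{\E,\cF}(X)$ is locally constant on the open set $\pi^{\E,\cF}(X)$; since the defect number takes values in ${\mb N}\cup\{\infty\}$, local constancy on an open set implies constancy on each connected component. So I fix $\lambda_0\in\pi^{\E,\cF}(X)$ and show that $d_\lambda^{\E,\cF}(X)=d_{\lambda_0}^{\E,\cF}(X)$ for every $\lambda$ in a sufficiently small disk around $\lambda_0$. By the symmetry of the argument (swapping $\lambda$ and $\lambda_0$), it suffices to prove the one-sided inequality $d_\lambda^{\E,\cF}(X)\le d_{\lambda_0}^{\E,\cF}(X)$ for $\lambda$ near $\lambda_0$, using also that $c_\lambda$ can be bounded below on a neighborhood of $\lambda_0$ (this follows from the very estimates used in the proof that $\pi(X)$ is open: one gets $c_\lambda\ge c_{\lambda_0}-|\lambda-\lambda_0|$).

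For the one-sided inequality I would argue by contradiction. If $d_\lambda^{\E,\cF}(X)> d_{\lambda_0}^{\E,\cF}(X)$, then the orthogonal projection in $\cF$ of the subspace $\mc R(X_\E-\lambda I_\E)^\perp$ onto $\mc R(X_\E-\lambda_0 I_\E)^\perp$ has a non-trivial kernel, so there exists a nonzero $\eta \in \mc R(X_\E-\lambda I_\E)^\perp$ with $\eta$ also orthogonal to $\mc R(X_\E-\lambda_0I_\E)^\perp$. By Proposition~\ref{propos}(iii) the range $\mc R(X_\E-\lambda_0 I_\E)$ is closed in $\cF$, so the orthogonal decomposition of $\cF$ forces $\eta\in\mc R(X_\E-\lambda_0 I_\E)$; hence $\eta=(X_\E-\lambda_0 I_\E)\xi$ for some $\xi\in\E$, necessarily $\xi\ne 0$.

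The key computational step is then the following identity, obtained by writing $X_\E-\lambda_0 I_\E = (X_\E-\lambda I_\E)+(\lambda-\lambda_0)I_\E$ and pairing with $\eta$ in $\cF$:
\begin{equation*}
\langle (X_\E-\lambda_0 I_\E)\zeta,\eta\rangle_\cF = (\lambda-\lambda_0)\langle\zeta,\eta\rangle_\cF,\qquad \forall \zeta\in\E,
\end{equation*}
where the term coming from $X_\E-\lambda I_\E$ vanishes because $\eta\perp\mc R(X_\E-\lambda I_\E)$. Specializing $\zeta=\xi$ yields $\|\eta\|_\cF^2=(\lambda-\lambda_0)\langle\xi,\eta\rangle_\cF$, so Cauchy--Schwarz gives $\|\eta\|_\cF\le|\lambda-\lambda_0|\,\|\xi\|_\cF$. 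Combining the continuous embedding $\E\hookrightarrow\cF$ (with constant $K>0$, i.e. $\|\xi\|_\cF\le K\|\xi\|_\E$) with the lower bound \eqref{eq_oneone}, namely $\|\xi\|_\E\le c_{\lambda_0}^{-1}\|\eta\|_\cF$, one deduces $\|\eta\|_\cF\le|\lambda-\lambda_0|Kc_{\lambda_0}^{-1}\|\eta\|_\cF$, so $|\lambda-\lambda_0|\ge c_{\lambda_0}/K$. This contradicts $\lambda$ being close to $\lambda_0$ and forces $d_\lambda^{\E,\cF}(X)\le d_{\lambda_0}^{\E,\cF}(X)$; the reverse inequality is obtained by exchanging the roles of $\lambda$ and $\lambda_0$, using the uniform lower bound on $c_\lambda$ near $\lambda_0$ to keep the resulting radius of validity strictly positive.

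I expect the main obstacle to be making clean use of the two distinct norms $\|\cdot\|_\E$ and $\|\cdot\|_\cF$ together with the embedding constant $K$: the inequality in \eqref{eq_oneone} lives in $\cF$ but bounds a quantity measured in $\E$, so one must carefully insert $K$ exactly once in order to convert $\|\xi\|_\E$ back to $\|\xi\|_\cF$ in the Cauchy--Schwarz step. Apart from this bookkeeping, the argument is the standard defect-number/regular-point pattern from symmetric operator theory adapted to the interspace setting.
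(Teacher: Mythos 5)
Your argument is correct and is precisely the ``classical argument with minor modifications'' that the paper invokes by citing Schm\"udgen: the standard projection lemma produces a nonzero $\eta$ in $\mc R(X_\E-\lambda I_\E)^\perp\cap\mc R(X_\E-\lambda_0 I_\E)$, and the estimate $\|\eta\|_\cF\le |\lambda-\lambda_0|\,K\,c_{\lambda_0}^{-1}\|\eta\|_\cF$ forces $d_\lambda^{\E,\cF}(X)\le d_{\lambda_0}^{\E,\cF}(X)$ for $|\lambda-\lambda_0|<c_{\lambda_0}/K$, with the roles then exchanged. The only adaptation needed in the interspace setting is exactly the one you flag, namely inserting the embedding constant of $\E\hookrightarrow\cF$ once to pass from $\|\xi\|_\E$ to $\|\xi\|_\cF$, and you handle it correctly.
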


Since the map $X \in {\mc C}(\E,\cF)\to X_\E \in  {\mc B}(\E, \cF)$ is a topological isomorphism, it can be more convenient to study certain properties in ${\mc B}(\E, \cF)$ rather than in ${\mc C}(\E,\cF)$. We begin with the notion of resolvent in ${\mc B}(\E, \cF)$.

\bedefi  Let $\E,\cF \in \gF_0$ with $\E\subseteq \cF$.
For $A \in {\mc B}(\E, \cF)$ we define
$$ \varrho_{\E,\cF} (A):=\{\lambda \in {\mb C}: (A-\lambda I_\E)^{-1} \mbox{ exists in }{\mc B}(\cF, \E) \}.$$
If $\lambda \in \varrho_{\E,\cF} (A)$ we put $R_\lambda^{\E,\cF}(A):= (A-\lambda I_\E)^{-1}.$
\findefi

Let $X \in \LDD$ and suppose that $X \in \mathcal{C}(\E, \cF)$, with $\E,\cF \in \gF_0$ with $\E\subseteq \cF$. Then we put $\varrho_{\E,\cF}(X):=\varrho_{\E,\cF}(X_\E)$ and $R_\lambda^{\E,\cF}(X):= (X_\E-\lambda I_\E)^{-1}$, if $\lambda \in \varrho_{\E,\cF}(X)$. It is clear that $\lambda\in \varrho_{\E, \cF}(X)$
if and only if $\lambda\in\pi^{\E, \cF}(X)$  and $(X_\E-\lambda I_\E)$ is surjective in $\cF$.
For consistency of notations we put $\varrho_{\E,\cF}(X)=\emptyset$ if  $X \not\in \mathcal{C}(\E, \cF)$. With this convention, one has
\begin{equation}\label{eq_unionresolvents} \varrho^{\gF_0} (X)= \bigcup_{\E,\cF \in \gF_0} \varrho_{\E,\cF}(X).
\end{equation}

\berem \label{rem_316}If $\gF_0$ is stable under duality (i.e., $\E \in \gF_0$ if and only if $\E^\times \in  \gF_0$), then by Remark \ref{rem_inverses} it follows that $\lambda \in \varrho_{\E,\cF}(X)$ if and only if $\overline{\lambda} \in \varrho_{\cF^\times,\E^\times }(X^\dag)$. This, in turn, implies that $\varrho^{\gF_0}(X)= \overline{\varrho^{\gF_0}(X^\dag)}$.\enrem
\begin{thm}Let $\E,\cF \in \gF_0$. The set $G({\mc B}(\E, \cF))$ of all invertible elements of ${\mc B}(\E, \cF)$ is open.
\end{thm}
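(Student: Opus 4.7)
The strategy is the classical Neumann-series argument for openness of the invertible group, adapted to the setting where the two Hilbert spaces $\E$ and $\cF$ are distinct. The key observation is that, although $A \in \mc{B}(\E,\cF)$ does not lie in an algebra (since domain and target differ), the composition $A^{-1}(A-B)$ is an element of the Banach algebra $\mc{B}(\E,\E)$ whenever $A^{-1}\in \mc{B}(\cF,\E)$ exists and $B\in \mc{B}(\E,\cF)$. This is the right object on which to apply the Neumann series.

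More precisely, fix $A \in G(\mc{B}(\E,\cF))$ and let $A^{-1}\in\mc{B}(\cF,\E)$ denote its inverse. For any $B\in \mc{B}(\E,\cF)$ I would write the factorisation
$$ B = A - (A-B) = A\bigl(I_\E - A^{-1}(A-B)\bigr),$$
where $A^{-1}(A-B)\in \mc{B}(\E,\E)$ and
$$ \|A^{-1}(A-B)\|_{\E,\E}\leq \|A^{-1}\|_{\cF,\E}\,\|A-B\|_{\E,\cF}.$$
Choose $B$ so that $\|A-B\|_{\E,\cF}< 1/\|A^{-1}\|_{\cF,\E}$; then the displayed norm is strictly less than $1$, so the Neumann series
$$ \sum_{n=0}^\infty \bigl(A^{-1}(A-B)\bigr)^n $$
converges in $\mc{B}(\E,\E)$ to an inverse of $I_\E - A^{-1}(A-B)$. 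Call this inverse $C\in \mc{B}(\E,\E)$.

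It then remains to check that $CA^{-1}\in \mc{B}(\cF,\E)$ is a two-sided inverse of $B$. On the one hand, $B\cdot CA^{-1} = A(I_\E - A^{-1}(A-B))CA^{-1} = A\cdot I_\E\cdot A^{-1} = I_\cF$, and on the other hand $CA^{-1}\cdot B = CA^{-1}A(I_\E-A^{-1}(A-B)) = C(I_\E-A^{-1}(A-B)) = I_\E$. Hence $B\in G(\mc{B}(\E,\cF))$, and the open ball of radius $1/\|A^{-1}\|_{\cF,\E}$ around $A$ in $\mc{B}(\E,\cF)$ lies inside $G(\mc{B}(\E,\cF))$, proving the set is open.

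I do not anticipate a real obstacle: the only subtlety is bookkeeping with the two spaces so that each composition makes sense and lies in the correct space of operators, but the factorisation above routes everything through the algebra $\mc{B}(\E,\E)$, after which the standard Neumann-series argument goes through verbatim.
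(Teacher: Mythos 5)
Your argument is correct and is essentially the paper's own proof: both factor the perturbed operator as $A$ composed with an element of the Banach algebra ${\mc B}(\E,\E)$ close to the identity and invoke the Neumann series there (the paper writes the perturbation as $A+B$ with $\|A^{-1}B\|_{\E,\E}<1$, you write it as $B=A(I_\E-A^{-1}(A-B))$, which is the same decomposition). Your version merely makes explicit the two-sided inverse check that the paper leaves implicit.
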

\begin{proof}
Let $A\in G({\mc B}(\E, \cF))$ and $B\in {\mc B}(\E, \cF)$.
We can write $A+B=A(I_\E + A^{-1}B )$ and if we choose $B$ such that  $\|A^{-1}B\|_{\E,\E}< 1$, then  $I_\E + A^{-1}B\in G({\mc B}(\E, \E))$, since ${\mc B}(\E, \E)$ is a Banach algebra, and then $A+B\in G({\mc B}(\E, \cF))$.
\end{proof}
\begin{thm} Let $\E,\cF \in \gF_0$. The map $A \in G({\mc B}(\E, \cF))\to A^{-1} \in {\mc B}(\cF, \E)$ is continuous.
\end{thm}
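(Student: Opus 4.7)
The plan is to reduce the problem to the classical Banach-algebra fact that inversion is continuous in $\mc{B}(\E,\E)$, which was tacitly used in the preceding theorem. Fix $A \in G(\mc{B}(\E,\cF))$ and let $B \in \mc{B}(\E,\cF)$ be close to $A$. The idea is to factor $B = A\bigl(I_\E + A^{-1}(B-A)\bigr)$, where $A^{-1}(B-A) \in \mc{B}(\E,\E)$ since $B-A: \E \to \cF$ and $A^{-1}: \cF \to \E$. Provided $\|A^{-1}(B-A)\|_{\E,\E} < 1$, which is guaranteed by $\|B-A\|_{\E,\cF} < \|A^{-1}\|_{\cF,\E}^{-1}$, the Neumann series in the Banach algebra $\mc{B}(\E,\E)$ produces the inverse
\[
C := \bigl(I_\E + A^{-1}(B-A)\bigr)^{-1} \in \mc{B}(\E,\E),
\]
so that $B$ is invertible with $B^{-1} = C A^{-1} \in \mc{B}(\cF,\E)$.

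The next step is a direct norm estimate for $B^{-1}-A^{-1}$. Writing
\[
B^{-1} - A^{-1} = (C - I_\E)A^{-1} = -A^{-1}(B-A)\,C\,A^{-1},
\]
and using the standard Neumann bound $\|C\|_{\E,\E} \le \bigl(1 - \|A^{-1}(B-A)\|_{\E,\E}\bigr)^{-1}$, one obtains
\[
\|B^{-1}-A^{-1}\|_{\cF,\E} \;\le\; \frac{\|A^{-1}\|_{\cF,\E}^{2}\,\|B-A\|_{\E,\cF}}{1 - \|A^{-1}\|_{\cF,\E}\,\|B-A\|_{\E,\cF}}.
\]
For $\|B-A\|_{\E,\cF}$ small enough (say, less than $\tfrac{1}{2}\|A^{-1}\|_{\cF,\E}^{-1}$), the right-hand side is dominated by $2\|A^{-1}\|_{\cF,\E}^{2}\,\|B-A\|_{\E,\cF}$, which proves continuity of inversion at $A$.

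I do not anticipate a serious obstacle: the only point that requires care is book-keeping of the various norms $\|\cdot\|_{\E,\cF}$, $\|\cdot\|_{\cF,\E}$, and $\|\cdot\|_{\E,\E}$, because $\mc{B}(\E,\cF)$ is only a Banach space and not an algebra. The multiplicative structure is used exclusively in the Banach algebra $\mc{B}(\E,\E)$, exactly as in the proof of openness of $G(\mc{B}(\E,\cF))$ given just above, and this is what makes the factorization $B = A(I_\E + A^{-1}(B-A))$ the natural device.
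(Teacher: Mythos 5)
Your proof is correct and follows essentially the same route as the paper: both arguments reduce continuity of inversion to the Banach-algebra structure of ${\mc B}(\E,\E)$ via the perturbation $A^{-1}(B-A)$ and arrive at the same quantitative bound $\|B^{-1}-A^{-1}\|_{\cF,\E}\lesssim \|A^{-1}\|_{\cF,\E}^{2}\,\|B-A\|_{\E,\cF}$ under the smallness condition $\|B-A\|_{\E,\cF}\|A^{-1}\|_{\cF,\E}\leq \tfrac12$. The only cosmetic difference is that the paper starts from the resolvent-type identity $A^{-1}-B^{-1}=B^{-1}(B-A)A^{-1}$ and absorbs the $\|B^{-1}-A^{-1}\|$ term, whereas you construct $B^{-1}=CA^{-1}$ explicitly by a Neumann series and estimate directly; your version has the mild advantage of also re-deriving the invertibility of $B$ rather than assuming it.
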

\begin{proof}
Let $A$, $B\in G({\mc B}(\E, \cF))$, then:
\begin{eqnarray*}\|A^{-1}-B^{-1}\|_{\cF,\E}&=&\|B^{-1}(B-A)A^{-1}\|_{\cF,\E}\\ &=&\|(B^{-1}-A^{-1})(B-A)A^{-1} + A^{-1}(B-A)A^{-1}\|_{\cF,\E}\\&\leq&\|B^{-1}-A^{-1}\|_{\cF,\E}\|B-A\|_{\E,\cF}\|A^{-1} \|_{\cF,\E}\\ &+& \|B-A\|_{\E,\cF} \|A^{-1}\|_{\cF,\E}^2.\end{eqnarray*}
If we take $\|B-A\|_{\E,\cF}$ such that $(1-\|B-A\|_{\E,\cF} \|A^{-1}\|_{\cF,\E})\geq \frac{1}{2}$, then $\|A^{-1}-B^{-1}\|_{\cF,\E}$ can be made arbitrarily small.
\end{proof}

\berem \label{rem_pow}
Let $A,B\in\mathcal{B}(\cF,\E)$, with $\E\subseteq\cF$. Let us define $A_0:=A\upharpoonright_{\E}$. Then $A_0\in {\mc B}(\E, \E)$.  The product $A\cdot B$ is defined by $A\cdot B\eta = A_0(B\eta)$, for every $\eta \in \cF$ and $$ \|A_0B\|_{\cF,\E }\|\leq \|A_0\|_{\E,\E }\|B\|_{\cF,\E }.$$

In particular, if $A\in\mathcal{B}(\cF,\E)$, with $\E\subseteq\cF$, the $n$-th power $ A^{(n)}$ of $A$ is defined by
$$ A^{(2)}:= A_0A, \quad\mbox{and}\quad A^{(n)}= A_0 A^{(n-1)}$$ and one has
\begin{equation}\label{eq_powers} \|A^{(n)}\|_{\cF,\E } \leq \|A_0\|_{\E,\E }^{n-1} \|A\|_{\cF,\E }.\end{equation}

\enrem

\begin{lemma} \label{lemma_residentities} Let $A,B\in\mathcal{B}(\E,\cF)$, with $\E\subseteq\cF$. The following resolvent identities hold:
\begin{align*} &R_\lambda^{\E,\cF} (A) - R_\lambda^{\E,\cF}  (B) =R_\lambda (A)^{\E,\cF}  (A-B) R_\lambda^{\E,\cF}  (B), \hspace{0,3cm}\forall \lambda \in \varrho_{\E,\cF} (A)\cap \varrho_{\E,\cF} (B);\\
&R_\lambda^{\E,\cF} (A) - R_{\lambda _0}^{\E,\cF} (A) = (\lambda - \lambda _0)R_\lambda^{\E,\cF} (A) R_{\lambda _0}^{\E,\cF} (A), \hspace{0,3cm}\forall \lambda , \lambda_0 \in \varrho_{\E,\cF}(A).\end{align*}
\end{lemma}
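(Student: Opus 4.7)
The plan is to derive both identities through the classical algebraic trick: write the difference of two shifted operators as something trivial, sandwich it between the corresponding resolvents, and let the collapsing relations $R_\lambda^{\E,\cF}(T)(T - \lambda I_\E) = I_\E$ on $\E$ and $(T - \lambda I_\E)R_\lambda^{\E,\cF}(T) = I_\cF$ on $\cF$ absorb the middle factors. The only twist relative to the single-space case is that $A, B \in \mathcal{B}(\E, \cF)$ while the resolvents lie in $\mathcal{B}(\cF, \E)$, so each three-factor product below must be read via the composition rule of Remark \ref{rem_pow}; this is legitimate because $\E$ embeds continuously into $\cF$, so restrictions such as $R_\lambda^{\E,\cF}(T)\!\upharpoonright_\E \in \mathcal{B}(\E,\E)$ make sense and furnish an associative product structure.

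For the first identity I would start from the trivial equality
\[
(A - \lambda I_\E) - (B - \lambda I_\E) = A - B,
\]
multiply on the left by $R_\lambda^{\E,\cF}(A)$ and on the right by $R_\lambda^{\E,\cF}(B)$, distribute the difference, and simplify the two resulting cross terms via the collapsing relations above. One middle factor becomes $I_\E$ (leaving $R_\lambda^{\E,\cF}(B)$), the other becomes $I_\cF$ (leaving $R_\lambda^{\E,\cF}(A)$), and the claim drops out (up to the obvious sign convention between $A-B$ and $B-A$).

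For the second identity the same strategy applies to the equally trivial equality
\[
(A - \lambda I_\E) - (A - \lambda_0 I_\E) = (\lambda_0 - \lambda)\, I_\E,
\]
now sandwiched between $R_\lambda^{\E,\cF}(A)$ on the left and $R_{\lambda_0}^{\E,\cF}(A)$ on the right. After the same two collapses one obtains the difference $R_{\lambda_0}^{\E,\cF}(A)-R_\lambda^{\E,\cF}(A)$ on the left and the scalar $(\lambda_0 - \lambda)$ multiplying $R_\lambda^{\E,\cF}(A)R_{\lambda_0}^{\E,\cF}(A)$ on the right, which is exactly the stated identity.

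The only genuinely non-routine point is bookkeeping: at each step one must certify that the alternating products $\mathcal{B}(\cF,\E)\cdot\mathcal{B}(\E,\cF)\cdot\mathcal{B}(\cF,\E)$ are well-defined in the framework of Remark \ref{rem_pow} and that the implicit associativity used when distributing and collapsing is justified. Once that is checked, no analytic input is needed beyond the defining equalities of the resolvents; in particular the two-Hilbert-space setting introduces no new convergence or boundedness issues because all factors involved are already bounded between the relevant members of $\gF_0$.
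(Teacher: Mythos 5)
The paper states this lemma without proof, and your argument is precisely the standard one it implicitly relies on: sandwich the trivial difference of the shifted operators between the two resolvents and collapse via $R_\lambda^{\E,\cF}(T)(T-\lambda I_\E)=I_\E$ on $\E$ and $(T-\lambda I_\E)R_\lambda^{\E,\cF}(T)=I_\cF$ on $\cF$. Your bookkeeping worry is harmless: each factor is a bounded map between the indicated spaces (the middle factor $\E\to\cF$ using the continuous inclusion), and composition of maps is associative, so Remark \ref{rem_pow} is needed only for norm estimates, not to legitimize the algebra. One point worth making explicit rather than waving at ``the obvious sign convention'': with the paper's definition $R_\lambda^{\E,\cF}(A)=(A-\lambda I_\E)^{-1}$ your computation gives $R_\lambda^{\E,\cF}(A)-R_\lambda^{\E,\cF}(B)=R_\lambda^{\E,\cF}(A)\,(B-A)\,R_\lambda^{\E,\cF}(B)$, so the first identity as printed carries a sign slip (it holds verbatim only for the convention $(\lambda I-A)^{-1}$), whereas the second identity comes out exactly as stated.
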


\begin{thm}\label{thm_resprop}Let $\E,\cF \in \gF_0$ and $A \in {\mc B}(\E, \cF)$. The following statements hold:
\begin{itemize}\item[(i)] $\varrho_{\E,\cF} (A)$ is open.
\item[(ii)] the function $\lambda \in \varrho_{\E,\cF} (A) \to  (A-\lambda I_\E)^{-1}\in {\mc B}(\cF, \E)$ is analytic on every connected component of  $\varrho_{\E,\cF} (A)$.\end{itemize}
\end{thm}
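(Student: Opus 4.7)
The plan is to mimic the classical Neumann series argument, taking care that $A - \lambda I_\E$ acts from $\E$ into $\cF$ while its inverse lives in $\mathcal{B}(\cF,\E)$; the key technical gadget is the notion of power introduced in Remark \ref{rem_pow}, which keeps track of the inclusion $\E\hookrightarrow \cF$ whenever we want to compose the resolvent with itself.

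I would start from an arbitrary $\lambda_0 \in \varrho_{\E,\cF}(A)$ and set $R_0 := R_{\lambda_0}^{\E,\cF}(A) = (A-\lambda_0 I_\E)^{-1}\in \mathcal{B}(\cF,\E)$. Writing $j:\E\hookrightarrow\cF$ implicitly for $I_\E$, I would verify the algebraic factorization
\begin{equation*}
A - \lambda I_\E \;=\; (A-\lambda_0 I_\E)\bigl[I_\E - (\lambda-\lambda_0)(R_0)_0\bigr],
\end{equation*}
where $(R_0)_0 := R_0\!\up{\E}\in \mathcal{B}(\E,\E)$ is the restriction used in Remark \ref{rem_pow}. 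Since $\mathcal{B}(\E,\E)$ is a Banach algebra, the bracket is invertible in $\mathcal{B}(\E,\E)$ as soon as $|\lambda-\lambda_0|\cdot\|(R_0)_0\|_{\E,\E}<1$, and its inverse is given by the usual geometric series $\sum_{n\ge 0}(\lambda-\lambda_0)^n (R_0)_0^n$. Composing on the right with $R_0\in\mathcal{B}(\cF,\E)$ and recalling that $(R_0)_0^n R_0 = R_0^{(n+1)}$ in the notation of Remark \ref{rem_pow}, I obtain the candidate expansion
\begin{equation*}
(A-\lambda I_\E)^{-1} \;=\; \sum_{n=0}^{\infty} (\lambda-\lambda_0)^n\, R_0^{(n+1)}.
\end{equation*}

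To justify convergence in $\mathcal{B}(\cF,\E)$, I would invoke the norm estimate \eqref{eq_powers}, which gives $\|R_0^{(n+1)}\|_{\cF,\E}\le \|(R_0)_0\|_{\E,\E}^n\,\|R_0\|_{\cF,\E}$, so the series converges absolutely on the disk $|\lambda-\lambda_0|<1/\|(R_0)_0\|_{\E,\E}$ (with the convention that the radius is $\infty$ if $(R_0)_0=0$). A direct check that this sum is both a left and a right inverse of $A-\lambda I_\E$ then shows that the whole disk lies in $\varrho_{\E,\cF}(A)$, proving (i), and simultaneously exhibits $R_\lambda^{\E,\cF}(A)$ as a norm-convergent power series centered at $\lambda_0$ with coefficients in $\mathcal{B}(\cF,\E)$, giving local analyticity at every point of $\varrho_{\E,\cF}(A)$, hence analyticity on every connected component.

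The only real subtlety, and what I expect to be the main obstacle, is bookkeeping: the operator $I_\E$ in $A-\lambda I_\E$ has to be read as the embedding $\E\hookrightarrow\cF$, so composition $R_0\cdot R_0$ is not literally defined in $\mathcal{B}(\cF,\E)$ but must go through the restriction $(R_0)_0$ — this is precisely what Remark \ref{rem_pow} codifies, and it is what allows the Neumann series to live in $\mathcal{B}(\cF,\E)$ rather than in a single Banach algebra. Once this is absorbed, the argument is a line-by-line transcription of the scalar-Banach-algebra case, and the resolvent identities of Lemma \ref{lemma_residentities} can be used as a cross-check (termwise differentiation of the series reproduces $-R_\lambda^{\E,\cF}(A)^{(2)}$, consistently with the second identity).
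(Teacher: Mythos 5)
Your argument is correct, but it takes a genuinely different route from the paper's own proof of this theorem. The paper proves (i) abstractly: two preliminary theorems establish that the set $G({\mc B}(\E,\cF))$ of invertible elements is open (via the same factorization trick $A+B=A(I_\E+A^{-1}B)$ in the Banach algebra ${\mc B}(\E,\E)$ that underlies your Neumann series) and that inversion is continuous; openness of $\varrho_{\E,\cF}(A)$ then follows by writing it as $h^{-1}(G({\mc B}(\E,\cF)))$ with $h(\lambda)=A-\lambda I_\E$ continuous. For (ii) the paper does not expand in a power series at all: it uses the second resolvent identity of Lemma \ref{lemma_residentities} to write the difference quotient as $(A-\lambda I_\E)^{-1}\restr{\E}(A-\mu I_\E)^{-1}$ and passes to the limit using continuity of inversion, thereby identifying the derivative as $(A-\mu I_\E)^{-1}\restr{\E}(A-\mu I_\E)^{-1}$. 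Your factorization $A-\lambda I_\E=(A-\lambda_0 I_\E)\bigl[I_\E-(\lambda-\lambda_0)(R_0)_0\bigr]$ is valid (the composite $(A-\lambda_0 I_\E)(R_0)_0$ is indeed the embedding $\E\hookrightarrow\cF$, i.e.\ the $I_\E$ appearing in $A-\lambda I_\E$), and it delivers openness and analyticity in one stroke together with the explicit expansion $\sum_n(\lambda-\lambda_0)^n R_0^{(n+1)}$ and the quantitative radius $\|(R_0)_0\|_{\E,\E}^{-1}$; this is precisely the content of the Proposition the paper proves separately immediately after this theorem (where openness is assumed already known). So your proof is more economical in that it subsumes that later result, at the price of not exhibiting the derivative directly through the resolvent identity. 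One small slip in your closing cross-check: with the convention $R_\lambda=(A-\lambda I_\E)^{-1}$ used here, termwise differentiation gives $+R_\lambda^{\E,\cF}(A)^{(2)}$ (matching the limit computed in the paper), not $-R_\lambda^{\E,\cF}(A)^{(2)}$; the minus sign belongs to the opposite convention $(\lambda I-A)^{-1}$.
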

\begin{proof}
 (i): Put $h(\lambda)= A-\lambda I_\E$. Then $\varrho_{\E,\cF} (A)= h^{-1}(G({\mc B}(\cF, \E))$ and so it is open, since $h$ is clearly continuous.

(ii):

If $\lambda \to \mu$, $(A-\lambda I_\E)^{-1} \to (A-\mu I_\E)^{-1}$, due to the continuity of the inversion. Since $\E \subset \cF$ and $I_\E: \E\to \E\subset \cF$ is continuous, we have $\|B\restr{\E}\|_{\E,\E}\leq \|B\|_{\cF,\E}$, for every $B \in {\mc B}(\cF, \E)$. Thus,
\mbox{$\|(A-\lambda I_\E)^{-1}\restr{\E} - (A-\mu I_\E)^{-1}\restr{\E}\|_{\E,\E}\to 0$} as $\lambda \to \mu$.
Hence,
\begin{align*}
\|(A-\lambda I_\E)^{-1}\restr{\E}&(A-\mu I_\E)^{-1} - (A-\mu I_\E)^{-1}\restr{\E}(A-\mu I_\E)^{-1}\|_{\cF,\E}\\
&\leq \|(A-\lambda I_\E)^{-1}\restr{\E} - (A-\mu I_\E)^{-1}\restr{\E}\|_{\E,\E} \|(A-\mu I_\E)^{-1}\|_{\cF, \E}\to 0,
\end{align*}
as $\lambda \to \mu$.
Finally, we get
\begin{align*}\lim_{\lambda \to \mu} \frac{(A-\lambda I_\E)^{-1} - (A-\mu I_\E)^{-1}}{\lambda-\mu}&= \lim_{\lambda \to \mu}( (A-\lambda I_\E)^{-1}\restr{\E}(A-\mu I_\E)^{-1})\\&=(A-\mu I_\E)^{-1}\restr{\E}(A-\mu I_\E)^{-1},\end{align*}
in the norm of ${\mc B}(\cF,\E)$.
\end{proof}

By (i) of Theorem \ref{thm_resprop} and by \eqref{eq_unionresolvents}, we get
\begin{prop} Let $X \in \LDD$ and $\gF_0$ a family of interspaces. Then, $\varrho^{\gF_0}(X)$ is open.
\end{prop}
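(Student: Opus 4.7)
The plan is essentially a one-line union-of-opens argument, but I will spell out the two cases so the reader sees why each term in the union is already known to be open.

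I would start from the representation
\[
\varrho^{\gF_0}(X) = \bigcup_{\E,\cF \in \gF_0,\, \E\subseteq \cF} \varrho_{\E,\cF}(X)
\]
established in \eqref{eq_unionresolvents}. It therefore suffices to show that for each admissible pair $(\E,\cF)$ the set $\varrho_{\E,\cF}(X)\subseteq\CN$ is open, after which we just use the fact that an arbitrary union of open subsets of $\CN$ is open.

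Now I would split into the two cases built into the definition. If $X\notin \mc{C}(\E,\cF)$, then by convention $\varrho_{\E,\cF}(X)=\emptyset$, which is trivially open. If instead $X\in \mc{C}(\E,\cF)$, then $\varrho_{\E,\cF}(X)=\varrho_{\E,\cF}(X_\E)$ with $X_\E\in {\mc B}(\E,\cF)$, and Theorem \ref{thm_resprop}(i) (applied to $A=X_\E$) asserts precisely that $\varrho_{\E,\cF}(X_\E)$ is an open subset of $\CN$.

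I do not foresee any real obstacle here: the content has already been packaged in Theorem \ref{thm_resprop}(i), which in turn relies on the openness of $G(\mc{B}(\cF,\E))$ (the previous theorem) together with continuity of $\lambda \mapsto X_\E - \lambda I_\E$. The only mild point of care is the empty case, which should be mentioned explicitly so that the union decomposition \eqref{eq_unionresolvents} covers every pair in $\gF_0\times\gF_0$ with $\E\subseteq\cF$ without any hidden hypothesis. Accordingly the proof is just a couple of lines.
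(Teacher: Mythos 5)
Your proof is correct and follows exactly the paper's own (one-line) argument: the authors likewise deduce the proposition from the union decomposition \eqref{eq_unionresolvents} together with Theorem \ref{thm_resprop}(i). Your explicit mention of the empty case for pairs with $X\notin\mc{C}(\E,\cF)$ is a harmless and reasonable elaboration of the same route.
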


Adopting the notations of Remark \ref{rem_pow} we have
\begin{prop}
Let $X\in \LDD$, $\E\subset\cF$ and $\lambda_0 \in \varrho_{\E,\cF}(X)$. Then there exists $\delta>0$ such that, for every $\lambda \in {\mb C}$ with $|\lambda-\lambda_0|<\delta$, $\lambda \in \varrho_{\E,\cF}(X)$ and
$$ R_{\lambda}^{\E,\cF} (X) = \sum_{n=0} ^{+\infty} (\lambda - \lambda_0)^n R_{\lambda_0}^{\E,\cF} (X)^{(n+1)} ,$$
where the series converges in the operator norm of $\mathcal{B}(\cF,\E)$.
\end{prop}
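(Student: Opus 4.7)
The plan is to reduce the statement to a standard Neumann-series expansion in the Banach algebra $\mathcal{B}(\E,\E)$. Write $R_{\lambda_0}:=R_{\lambda_0}^{\E,\cF}(X)\in\mathcal{B}(\cF,\E)$ and, following Remark \ref{rem_pow}, set $R_0:=R_{\lambda_0}\upharpoonright_{\E}\in\mathcal{B}(\E,\E)$. The key algebraic identity is the factorization
$$X_\E-\lambda I_\E=(X_\E-\lambda_0 I_\E)\bigl(I_\E-(\lambda-\lambda_0)R_0\bigr),$$
valid as maps $\E\to\cF$; to check it one applies both sides to $\xi\in\E$ and uses that $R_0\xi=R_{\lambda_0}\xi\in\E$, so $(X_\E-\lambda_0 I_\E)R_0\xi=\xi$ by the defining property of $R_{\lambda_0}=(X_\E-\lambda_0 I_\E)^{-1}$.

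Next, I would set $\delta:=\|R_0\|_{\E,\E}^{-1}$ (with $\delta:=+\infty$ in the trivial case $R_0=0$). For $|\lambda-\lambda_0|<\delta$, the standard Neumann series in the Banach algebra $\mathcal{B}(\E,\E)$ yields
$$\bigl(I_\E-(\lambda-\lambda_0)R_0\bigr)^{-1}=\sum_{n=0}^{+\infty}(\lambda-\lambda_0)^n R_0^n,$$
with convergence in $\|\cdot\|_{\E,\E}$. Combining this with the factorization above and composing on the right with $R_{\lambda_0}\in\mathcal{B}(\cF,\E)$, I conclude that $X_\E-\lambda I_\E$ is a bicontinuous bijection of $\E$ onto $\cF$—so that $\lambda\in\varrho_{\E,\cF}(X)$—and
$$R_\lambda^{\E,\cF}(X)=\sum_{n=0}^{+\infty}(\lambda-\lambda_0)^n R_0^n R_{\lambda_0}.$$
By the recursive definition in Remark \ref{rem_pow} one has $R_0^n R_{\lambda_0}=R_{\lambda_0}^{(n+1)}$, which matches the expression in the statement.

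Finally, I would upgrade the convergence to the norm of $\mathcal{B}(\cF,\E)$. Using the estimate \eqref{eq_powers} with $A=R_{\lambda_0}$ gives $\|R_{\lambda_0}^{(n+1)}\|_{\cF,\E}\leq\|R_0\|_{\E,\E}^{n}\|R_{\lambda_0}\|_{\cF,\E}$, so the series is dominated by a convergent geometric one whenever $|\lambda-\lambda_0|<\delta$. The only subtle point, and arguably the main obstacle in presentation, is the bookkeeping between the two norms $\|\cdot\|_{\E,\E}$ and $\|\cdot\|_{\cF,\E}$: the smallness condition on $|\lambda-\lambda_0|$ must be formulated in terms of $\|R_0\|_{\E,\E}$—because the Neumann inversion takes place inside the Banach algebra $\mathcal{B}(\E,\E)$—while $\|R_{\lambda_0}\|_{\cF,\E}$ only enters the final bound as a constant factor.
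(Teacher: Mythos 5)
Your proof is correct, but it follows a genuinely different route from the paper's. The paper first invokes the openness of $\varrho_{\E,\cF}(X)$ (established earlier via the openness of the set of invertible elements of ${\mc B}(\E,\cF)$) to get a radius $r$ with the disk $|\lambda-\lambda_0|<r$ inside the resolvent set, takes $\delta=\min\{r,\|(R_{\lambda_0}^{\E,\cF}(X))_0\|_{\E,\E}^{-1}\}$, proves norm convergence of the series by the Cauchy estimate you also use, and only then identifies the sum with $R_\lambda^{\E,\cF}(X)$ by iterating the second resolvent identity of Lemma \ref{lemma_residentities}. You instead prove everything at once from the factorization $X_\E-\lambda I_\E=(X_\E-\lambda_0 I_\E)\bigl(I_\E-(\lambda-\lambda_0)R_0\bigr)$ and a Neumann inversion in the Banach algebra ${\mc B}(\E,\E)$: this is self-contained (no appeal to the openness theorem or to the resolvent identities), it produces membership of $\lambda$ in $\varrho_{\E,\cF}(X)$ constructively rather than by citation, and it gives the cleaner, possibly larger radius $\delta=\|R_0\|_{\E,\E}^{-1}$ with no need for the minimum with $r$. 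Your handling of the two norms is exactly right --- the inversion lives in ${\mc B}(\E,\E)$ while $\|R_{\lambda_0}\|_{\cF,\E}$ enters only as a multiplicative constant via \eqref{eq_powers} --- and the identification $R_0^nR_{\lambda_0}=R_{\lambda_0}^{(n+1)}$ matches the recursive definition of Remark \ref{rem_pow}. (A minor aside: the degenerate case $R_0=0$ never occurs, since $R_{\lambda_0}$ is injective on $\cF\supseteq\E\supseteq\D\neq\{0\}$.) What the paper's route buys is reuse of machinery already set up for the analyticity statement of Theorem \ref{thm_resprop}; what yours buys is a shorter, independent argument that would also reprove that openness as a byproduct.
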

\begin{proof}
We already know that $\varrho_{\E,\cF} (X)$ is an open set; thus there exists $r>0$ such that the disk $|\lambda- \lambda_0|<r$ is contained in $\varrho_{\E,\cF} (X)$. Let $\delta=\min\left\{ r, \left\|\left(R_{\lambda_0}^{\E,\cF} (X)\right)_0\right\|^{-1}_{\E,\E}\right\}$,  hence $\lambda\in \varrho_{\E,\cF} (X)$ and

\begin{eqnarray*}& & \left\| \sum_{n=k+1} ^{k+p} (\lambda - \lambda_0)^n R_{\lambda_0}^{\E,\cF} (X)^{(n+1)}\right\|_{\cF,\E}= \left\|\sum_{n=k+1} ^{k+p} (\lambda - \lambda_0)^n \left(R_{\lambda_0}^{\E,\cF} (X)\right)_0^{(n)} R_{\lambda_0}^{\E,\cF} (X)\right\|_{\cF,\E}\\
& \leq& \left\|\sum_{n=k+1} ^{k+p} (\lambda - \lambda_0)^n \left(R_{\lambda_0}^{\E,\cF} (X)\right)_0^{(n)}\right\|_{\E,\E}\left\| R_{\lambda_0}^{\E,\cF} (X)\right\|_{\cF,\E}
\end{eqnarray*}

Since $\left\|(\lambda - \lambda_0) \left(R_{\lambda_0}^{\E,\cF} (X)\right)_0\right\|_{\E,\E}<1$, the series
$$ \sum_{n=0} ^{+\infty} (\lambda - \lambda_0)^n R_{\lambda_0}^{\E,\cF} (X)^{(n+1)}$$
converges in the operator norm of $\mathcal{B}(\cF,\E)$.

Using the second identity in Lemma \ref{lemma_residentities}, we finally obtain, in standard way,
$$ R_{\lambda}^{\E,\cF} (X) = \sum_{n=0} ^{+\infty} (\lambda - \lambda_0)^n R_{\lambda_0}^{\E,\cF} (X)^{(n+1)}.$$
\end{proof}

\begin{lemma} Let $\lambda \in \varrho_{\E,\cF}(X)$, $\E, \cF \in \gF_0$, $\E\subseteq \cF$. Then
\begin{itemize}
\item[(i)] $\lambda \in \sigma_{\E,\cF'}(X)$, for every $ \cF' \in \gF_0$, $\cF' \supsetneqq \cF$;
\item[(ii)] if $ \E \varsubsetneqq \E' \subseteq \cF$ and $X\in \mc{C}(\E', \cF)$, then $\lambda$ is an eigenvalue of $X_{\E'}$ and, hence, $\lambda \in \sigma_{\E', \cF}(X)$.
\end{itemize}
\end{lemma}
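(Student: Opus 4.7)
The plan is to exploit the well-definedness of continuous extensions of $X$ and track how the image of $X_{\E}-\lambda I_{\E}$ sits inside the target interspace in each situation.

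For (i), I would first dispose of the trivial case $X\not\in \mc{C}(\E,\cF')$, where $\varrho_{\E,\cF'}(X)=\emptyset$ by convention. Assuming $X\in \mc{C}(\E,\cF')$, observe that the continuous embedding $\cF\hookrightarrow \cF'$ means the map $(X_{\E}-\lambda I_{\E}):\E\to \cF'$ factors through $\cF$, so its range equals the image of $\cF$ inside $\cF'$. Since $\lambda\in \varrho_{\E,\cF}(X)$ forces $(X_{\E}-\lambda I_{\E})\E=\cF$, the range in $\cF'$ is exactly $\cF$, which is a proper subset of $\cF'$ by the hypothesis $\cF'\supsetneqq \cF$. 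Hence condition (i.1) of the definition of $\varrho^{\gF_0}$ fails for the pair $(\E,\cF')$, and $\lambda\in \sigma_{\E,\cF'}(X)$.

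For (ii), the key is to exhibit a nonzero kernel element for $X_{\E'}-\lambda I_{\E'}$. The preliminary step is the identification $X_{\E'}\restr{\E}=X_{\E}$: both are continuous maps from $\E$ into $\cF$ (the former because $\E\hookrightarrow \E'$ continuously and $X_{\E'}$ is continuous $\E'\to \cF$) which coincide with $X$ on $\D$, and uniqueness of continuous extensions out of the dense subspace $\D\subset \E$ forces equality. Now pick any $\eta\in \E'\setminus \E$, which exists since $\E\subsetneqq\E'$. Because $\lambda\in \varrho_{\E,\cF}(X)$, the map $X_{\E}-\lambda I_{\E}:\E\to \cF$ is surjective, so there is $\zeta\in \E$ with
\[
(X_{\E}-\lambda I_{\E})\zeta = (X_{\E'}-\lambda I_{\E'})\eta .
\]
Using $X_{\E'}\restr{\E}=X_{\E}$, this yields $(X_{\E'}-\lambda I_{\E'})(\eta-\zeta)=0$ with $\eta-\zeta\neq 0$ (since $\eta\notin \E$ while $\zeta\in \E$). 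Thus $\lambda$ is an eigenvalue of $X_{\E'}$; in particular $X_{\E'}-\lambda I_{\E'}$ is not injective, so condition (i.2) fails and $\lambda\in \sigma_{\E',\cF}(X)$.

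The only nonroutine point I foresee is the identification $X_{\E'}\restr{\E}=X_{\E}$ in part (ii), which rests on the density of $\D$ in every interspace and on both restrictions being continuous into $\cF$; once that is in hand, the surjectivity from $\varrho_{\E,\cF}(X)$ immediately supplies the eigenvector via the difference $\eta-\zeta$. Part (i) is essentially a range-comparison argument with no real obstacle beyond noting that the embedding $\cF\hookrightarrow\cF'$ has dense but proper image whenever $\cF\subsetneqq\cF'$.
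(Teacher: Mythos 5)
Your proof is correct and follows essentially the same route as the paper: part (i) is the range-comparison argument the authors dismiss as straightforward, and part (ii) is exactly the difference-of-preimages construction from Remark \ref{rem_globinverse}, which the paper explicitly invokes (using surjectivity of $X_\E-\lambda I_\E$ onto $\cF$ and the identification $X_{\E'}\restr{\E}=X_\E$ via density of $\D$). No gaps.
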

\begin{proof} (i) is straightforward. As for (ii) the proof is similar to that given in Remark \ref{rem_globinverse}.
\end{proof}

Recalling that, if $B \in {\mc B}(\E,\cF)$, then $B\upharpoonright_\D\in \LDD$, it is natural to give the following
\bedefi \label{def_B}Let $\E, \E', \cF, \cF'\in \gF_0$ and $B \in {\mc B}(\E,\cF)$, $C \in {\mc B}(\E',\cF')$. We say that $B$ and $C$ are equivalent, and write $B\equiv C$, if $B\upharpoonright_\D=C\upharpoonright_\D$.

\findefi

For fixed $\E, \cF\in \gF_0$,with $\E\subseteq \cF$, $X\in\mc{C}(\E, \cF)$ the function
$$ \lambda \in \varrho_{\E,\cF} (X_\E)\to  (X_\E -\lambda I_\E)^{-1} \in {\mc B}(\cF,\E)$$
is analytic on $\varrho_{\E,\cF} (X_\E)$. Thus, if we define $(X -\lambda I)^{-1}_{(\E,\cF)}:= (X_\E -\lambda I_\E)^{-1} \upharpoonright_\D$, for every $\lambda \in \varrho_{\E,\cF} (X_\E)$, $(X -\lambda I)^{-1}_{(\E,\cF)}\in \LDD$.

 Moreover, the function
$$\lambda \in \varrho_{\E,\cF} (X_\E)\to (X -\lambda I)^{-1}_{(\E,\cF)}\in \LDD$$
is analytic if $\LDD$ is endowed with the inductive topology $\tau_{\rm in}$ defined by the family of Banach spaces $\{{\mc B}(\E, \cF);\,  \E, \cF\in \gF_0\}.$

Let us fix $\E\in \gF_0$. If $\lambda \in \varrho_{\E,\cF}(X)$, for some $\cF\in \gF_0$, $\E \subseteq \cF$, then this $\cF$ is unique.
Hence, the function $f_\E$ defined by
\begin{equation}\label{eq_branch} f_\E(\lambda) = (X -\lambda I)^{-1}_{(\E,\cF)} , \quad \mbox{if } \lambda \in \varrho_{\E,\cF}(X)\end{equation}
is a single valued function, analytic on every connected component of every open set $\varrho_{\E,\cF}(X)$.

\medskip
More in general, when $\E,\cF$ run over $\gF_0$ we get a whole family of resolvent functions and several different situations are possible.

If $\lambda_0\in \varrho^{\gF_0}(X)$, then $\lambda_0\in \varrho_{\E,\cF} (X_\E)$ for some $\E,\cF\in \gF_0$, $\E\subseteq \cF$. Then there exists an open disk $D_r=\{\lambda: |\lambda-\lambda_0|<r\} \subset \varrho_{\E,\cF} (X_\E) $ and the function $\lambda\in D_r \to (X_\E -\lambda I_\E)^{-1}$ is analytic on $D_r$. But it may happen that $\lambda_0\in \varrho_{\E',\cF'} (X_{\E'})$ for another couple ${\E}',{\cF'}\in \gF_0$, $\E'\subseteq \cF'$. The corresponding resolvent function $\lambda\to (X_{\E'} -\lambda I_{\E'})^{-1}$ is analytic in some open disk $D_{r'}$, but $(X_\E -\lambda I_\E)^{-1}$ and $(X_{\E' } -\lambda I_{\E'})^{-1}$ need not be equivalent on $D_r\cap D_{r'}$.
\begin{lemma}\label{lemma_320}
Let $\lambda_0 \in \varrho_{\E,\cF} (X_\E) \cap \varrho_{\E',\cF'} (X_{\E'})$ for some $\E, \E',\cF, \cF'\in \gF_0$. Let us assume that $\E \subset \E'$.
The corresponding resolvent functions are equivalent on some open neighborhood of $\lambda_0$ and they are direct analytic continuations of each other.
\end{lemma}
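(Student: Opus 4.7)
The plan is to find an open disk around $\lambda_0$ on which both resolvent functions are defined and coincide as elements of $\LDD$, then invoke analyticity. First, using the openness of $\varrho_{\E,\cF}(X_\E)$ and $\varrho_{\E',\cF'}(X_{\E'})$ (Theorem~\ref{thm_resprop}(i)), I would pick an open disk $D_r=\{\lambda\in\CN:|\lambda-\lambda_0|<r\}$ contained in the intersection $\varrho_{\E,\cF}(X_\E)\cap\varrho_{\E',\cF'}(X_{\E'})$; on $D_r$ both resolvent functions $f_\E$ and $f_{\E'}$ of \eqref{eq_branch} are well-defined and, as noted immediately after that formula, analytic into $\LDD[\tau_{\rm in}]$.

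The main step is to show $f_\E(\lambda)\equiv f_{\E'}(\lambda)$ for every $\lambda\in D_r$, i.e., $(X_\E-\lambda I_\E)^{-1}\zeta=(X_{\E'}-\lambda I_{\E'})^{-1}\zeta$ in $\D^\times$ for each $\zeta\in\D$. Given such a $\zeta$, set $\eta:=(X_\E-\lambda I_\E)^{-1}\zeta\in\E$, and regard $\eta$ as an element of $\E'$ via the interspace inclusion $\E\subset\E'$. The crucial point is that $X_\E\eta=X_{\E'}\eta$ in $\D^\times$: the interspace inclusion implies that $\tau_\E$ is finer than the topology induced on $\E$ by $\tau_{\E'}$, so both $X_\E$ and $X_{\E'}\upharpoonright_\E$ are continuous maps $\E[\tau_\E]\to\D^\times$ that agree with $X$ on the $\tau_\E$-dense subspace $\D$; hence they coincide on all of $\E$. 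Consequently $(X_{\E'}-\lambda I_{\E'})\eta=\zeta$ in $\cF'$, and since $\lambda\in\varrho_{\E',\cF'}(X_{\E'})$ the injectivity of $X_{\E'}-\lambda I_{\E'}$ forces $\eta=(X_{\E'}-\lambda I_{\E'})^{-1}\zeta$, which is exactly the required equivalence.

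Finally, the two analytic functions $f_\E$ and $f_{\E'}$ with values in $\LDD[\tau_{\rm in}]$ agree on the open disk $D_r$, so by the identity principle for vector-valued analytic functions they are direct analytic continuations of each other, as claimed. The only delicate point in the whole argument is the identification $X_\E\eta=X_{\E'}\eta$ on a common vector of $\E$; this rests on the density of $\D$ in every interspace (in its Mackey topology) together with the continuity of the chain of embeddings $\E\hookrightarrow\E'\hookrightarrow\D^\times$. Everything else is either a topological fact already recorded in the paper or a straightforward analyticity argument.
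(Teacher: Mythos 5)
Your proposal is correct and follows essentially the same route as the paper: the heart of both arguments is that $X_{\E'}$ extends $X_\E$ because the two continuous maps agree with $X$ on the $\tau_\E$-dense subspace $\D$, whence $X_\E-\lambda I_\E\subseteq X_{\E'}-\lambda I_{\E'}$ and the inverses coincide on $\D$, giving equivalence in the sense of Definition~\ref{def_B} and, by the identity principle, direct analytic continuation. Your version merely makes the density/continuity step and the injectivity step more explicit than the paper does.
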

\begin{proof}
If $\E \subset \E'$, then $\cF \subset \cF'$ and $(X_{\E' } -\lambda I_{\E'})^{-1}$ is an extension of $(X_\E -\lambda I_\E)^{-1}$. Indeed, since $X_\E -\lambda I_\E \subseteq X_{\E' } -\lambda I_{\E'}$, their inverses coincide on $(X_\E -\lambda I_\E)\E=\cF$. This implies that  $(X_{\E' } -\lambda I_{\E'})^{-1}\equiv (X_\E -\lambda I_\E)^{-1}$ in the sense of Definition \ref{def_B} and the corresponding resolvent functions are direct analytic continuation one of the other.  \end{proof}
\begin{prop} Let $\lambda_0 \in \varrho_{\E,\cF} (X_\E) \cap \varrho_{\E',\cF'} (X_{\E'})$ for some $\E, \E',\cF, \cF'\in \gF_0$. If there exist ${\mc G}$ and ${\mc G}'$ in $\gF_0$ such that ${\mc G}\subseteq \E \cap \E'$ and $\lambda_0 \in \varrho_{{\mc G},{\mc G}'}(X)$,
then the functions $\lambda\to (X_{\E} -\lambda I_{\E})^{-1}$ and $\lambda\to (X_{\E'} -\lambda I_{\E'})^{-1}$ are analytic continuations of each other in some open connected set containing $\lambda_0$.\end{prop}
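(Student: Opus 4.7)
The plan is to apply Lemma~\ref{lemma_320} twice, using ${\mc G}$ as a common smaller interspace that bridges between $\E$ and $\E'$. First, since ${\mc G} \subseteq \E$ and $\lambda_0 \in \varrho_{{\mc G}, {\mc G}'}(X) \cap \varrho_{\E, \cF}(X)$ by hypothesis, Lemma~\ref{lemma_320}, applied with $({\mc G}, {\mc G}')$ playing the role of the smaller pair and $(\E, \cF)$ the larger one, yields that the branches $f_{\mc G}$ and $f_\E$ of the resolvent function defined in \eqref{eq_branch} coincide on some open neighborhood of $\lambda_0$ and are direct analytic continuations of each other. Symmetrically, since ${\mc G} \subseteq \E'$, the same lemma applied with $(\E, \cF)$ replaced by $(\E', \cF')$ gives that $f_{\mc G}$ and $f_{\E'}$ are direct analytic continuations of each other on an open neighborhood of $\lambda_0$.

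Chaining these two identifications through the common branch $f_{\mc G}$ shows that $f_\E$ and $f_{\E'}$ are analytic continuations of each other. To exhibit an explicit open connected domain, I would take $U$ to be the connected component of
$$ \varrho_{{\mc G}, {\mc G}'}(X) \cap \varrho_{\E, \cF}(X) \cap \varrho_{\E', \cF'}(X) $$
containing $\lambda_0$; this set is open (as intersection of three open sets, by Theorem~\ref{thm_resprop}(i)) and non-empty (it contains $\lambda_0$). On $U$ each of the branches $f_{\mc G}, f_\E, f_{\E'}$ is analytic in the operator-norm topology of the corresponding Banach space (Theorem~\ref{thm_resprop}(ii)), and since they coincide pairwise on a neighborhood of $\lambda_0$ by the two applications of Lemma~\ref{lemma_320}, the vector-valued identity theorem (applied, for instance, to $\lambda \mapsto (f_\E(\lambda) - f_{\E'}(\lambda))\xi$ for each fixed $\xi \in \D$) forces $f_\E \equiv f_{\E'}$ throughout $U$ in the sense of Definition~\ref{def_B}.

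The only step requiring real care is the last one: Lemma~\ref{lemma_320} supplies only local agreement between neighboring branches, whereas the conclusion is phrased on an entire open connected set. This gap is closed by the analytic continuation argument above, once the three branches are recognized as analytic on their respective (open) resolvent sets. All other ingredients are essentially book-keeping about which interspace plays which role when invoking Lemma~\ref{lemma_320}.
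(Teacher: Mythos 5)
Your proof is correct and follows essentially the same route as the paper: the authors likewise apply Lemma \ref{lemma_320} twice, to the pairs $({\mc G},{\mc G}')$, $(\E,\cF)$ and $({\mc G},{\mc G}')$, $(\E',\cF')$, and then chain the two direct continuations through the common branch $f_{\mc G}$. The only divergence is your final step: the paper stops at the chain (so the two branches are analytic continuations of each other by transitivity of the continuation relation) and even adds the caveat that they ``need not be direct analytic continuations one of the other'', whereas you push further with the identity theorem on the connected component $U$ of $\varrho_{{\mc G},{\mc G}'}(X)\cap\varrho_{\E,\cF}(X)\cap\varrho_{\E',\cF'}(X)$ to conclude $f_\E\equiv f_{\E'}$ throughout $U$ in the sense of Definition \ref{def_B}. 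That extra step is sound (equivalence of the restrictions to $\D$ is transitive, and the scalar functions $\lambda\mapsto\ip{f_\E(\lambda)\xi}{\eta}$, $\xi,\eta\in\D$, are analytic and agree near $\lambda_0$), so you actually obtain a slightly stronger conclusion than the one stated; just be aware that it only applies on that particular component, and that the paper's weaker phrasing is deliberate, since outside such a set, or in the absence of a common ${\mc G}$, the two resolvent branches can genuinely differ (cf.\ the Krein-formula computation in Example \ref{ex. 42}).
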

\begin{proof} One can apply the reasoning of Lemma \ref{lemma_320} to $(X_\E -\lambda_0 I_\E)^{-1}$ and $(X_{\mc G} -\lambda_0 I_{\mc G})^{-1}$ or to $(X_{\E'} -\lambda_0 I_{\E'})^{-1}$ and $(X_{\mc G} -\lambda_0 I_{\mc G})^{-1}$. Then, the functions $\lambda\to (X_{\E} -\lambda I_{\E})^{-1}$ and $\lambda \to (X_{\mc G} -\lambda I_{\mc G})^{-1}$ are equivalent in some open disk $D_r$ and they are direct analytic continuation one of the other. The same happens of course for the functions $\lambda\to (X_{\E'} -\lambda I_{\E'})^{-1}$ and $\lambda \to (X_{\mc G} -\lambda I_{\mc G})^{-1}$. Thus the functions indexed by $\E$ and $\E'$ are analytic continuations of each other (but they need not be direct analytic continuation one of the other).
 \end{proof}

\berem Let us finally examine the general situation. Let $\lambda_0 \in \varrho_{\E,\cF} (X_\E) \cap \varrho_{\E',\cF'} (X_{\E'})$ for some $\E, \E',\cF, \cF'\in \gF_0$. Since $\D$ is dense in $\E\cap\E'$, for the Mackey topology, then $(X_\E -\lambda_0 I_\E)\upharpoonright _{(\E\cap {\E}')}(\E\cap \E') = (X_{\E'} -\lambda_0 I_{\E'})\upharpoonright _{(\E\cap \E')}(\E\cap \E') = \L\subseteq \cF\cap\cF'$. The equality $\L=\cF\cap\cF'$ does not hold in general.
$(X_\E -\lambda_0 I_\E)^{-1}$ is well-defined in $\cF$ and then also in $\L$. The same holds for $(X_{\E'} -\lambda_0 I_{\E'})^{-1}$ but these operators need not be equivalent.
 \enrem

If $\lambda_0 \in \varrho^{\gF_0}(X)$, we denote by $(\mathbf{X } -\lambda_0 \mathbf{ I})^{-1}$ the collection of all resolvent operators corresponding to $\lambda_0$ and by $\lambda \in \varrho^{\gF_0}(X) \to (\mathbf{X -\lambda I})^{-1}$ the multivalued resolvent function described above. For each fixed $\E \in \gF_0$, the function $f_\E$, defined in \eqref{eq_branch}, can be viewed as a single valued branch of $ \lambda \to(\mathbf{X -\lambda I})^{-1}$. In Example \ref{ex. 42} we shall see a concrete realization of this situation.
\section{Hilbert space operators} \label{sect_Hilbert}
Let $X \in \LDD$ and assume that both $X$ and $X^\dag$ map $\D$ into $\H$. Then $X$ can be viewed as a closable operator in $\H$. Let $\overline{X}$ be its closure and $\varrho_\H(\overline{X})$ its usual resolvent set. We denote by $\H_X$ the Hilbert space obtained by endowing $D(\overline{X})$ with the graph norm. If $\H_X, \H\in \gF_0$ then $X \in {\mc C}(\H_X,\H)$ and $\varrho_{\H_X,\H}=\varrho_\H(\overline{X})$; so that $\sigma^{\gF_0}(X)\subseteq \sigma_\H(\overline{X})$ (this implies, in particular that, if $X$ is bounded, $\sigma^{\gF_0}(X)$ is compact). As we will see below, however, $\sigma_\H(\overline{X})$ and $\sigma^{\gF_0}(X)$ need not coincide.
\subsection{Rigged Hilbert spaces generated by symmetric operators} \label{subsect_symmetricop}
Let $A$ be a self-adjoint operator in Hilbert space $\H$. The space $\D= \D^\infty (A)$, endowed with its natural topology $t_A$, defined by the seminorms $p_n(\xi)= \|A^n \xi\|$, $n \in {\mb N}$, generates in canonical way a RHS, with $\D$ a Fr\'echet space. For every $n \in {\mb N}$ we denote by $\H_n$ the Hilbert space obtained by endowing $D(A^n)$ with its graph norm \mbox{$\|\cdot\|_n:=\| (I + A^{2n})^{1/2} \cdot \|$}and by $\H_{-n}$ the space obtained by completing $\H$ with respect to the norm $\|\cdot \|_{-n}:= \|(I + A^{2n})^{-1/2}\cdot\|$. Put $\H_0:=\H$. Then, the family of spaces $ \{\H_n; \, n\in {\mb Z}\}$ is totally ordered; namely,
$$ \cdots \H_{n+1} \subset \H_n \subset \cdots \subset \H=\H_0 \subset \H_{-n} \subset \H_{-n-1} \cdots $$

Let us put $S = A\!\upharpoonright\! _\D$ and take $\gF _0 = \{ \h _n ; n\in {\mb Z}\}$.
{The operator $A$ (or its extension by duality denoted by the same symbol) maps $\h _n$ in $\h _{n-1}$, $\forall n\in {\mb Z}$ continuously; hence $S\in C (\h _n, \h _{n-1})$, for every $n\in {\mb Z}$.}
Let us denote by $\varrho _\h (A)$ the usual resolvent of $A$.
 For shortness, we will put $\varrho _{n,m}(S):=\varrho _{\h _n , \h _m}(S) $.
 \begin{prop}\label{prop_selfadj} Let $A$ be a self-adjoint operator, $\D$ and $\gF_0$ as above. Then $\varrho ^{\gF _0} (S) = \varrho _\h (A)$.
 \end{prop}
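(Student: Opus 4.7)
The plan is to prove the two inclusions $\varrho_\H(A)\subseteq\varrho^{\gF_0}(S)$ and $\varrho^{\gF_0}(S)\subseteq\varrho_\H(A)$ separately, in both cases using Borel functional calculus for the self-adjoint operator $A$ together with the fact that the extensions of $A$ to the scale $\{\H_n\}_{n\in\mb{Z}}$ are given precisely by the calculus.

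For $\varrho_\H(A)\subseteq\varrho^{\gF_0}(S)$, I would show that any $\lambda\in\varrho_\H(A)$ actually lies in $\varrho_{1,0}(S)$. The operator $A-\lambda I$ is a bijection from $D(A)=\H_1$ onto $\H=\H_0$, and the canonical extension $S_{\H_1}$ coincides with this operator. Since $(A-\lambda I)^{-1}$ commutes with $(I+A^2)^{1/2}$, the computation
$$\|(A-\lambda I)^{-1}\xi\|_1=\|f(A)\xi\|_0,\qquad f(x)=\frac{\sqrt{1+x^2}}{x-\lambda},$$
shows continuity from $\H_0$ into $\H_1$, because $f$ is bounded on $\sigma(A)\subseteq\mb R$ (no pole on the spectrum, asymptotic value $1$ at infinity). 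This gives membership in $\varrho_{1,0}(S)$ and hence in the union $\varrho^{\gF_0}(S)$.

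For the reverse inclusion the plan is a contradiction argument. Suppose $\lambda\in\varrho_{n,m}(S)$ for integers $n\geq m$; by the definition of the resolvent and the identification $S_{\H_n}=A\up\H_n$, there is $C>0$ with $\|\xi\|_n\leq C\|(A-\lambda I)\xi\|_m$ for every $\xi\in\H_n$. Assume towards a contradiction that $\lambda\in\sigma_\H(A)$, so that the spectral projections $P_k:=E_A(\{x:|x-\lambda|<1/k\})$ are all nonzero. Picking $\xi_k$ of unit $\H$-norm in the range of $P_k$, boundedness of $A$ on $P_k\H$ gives $\xi_k\in\D^\infty(A)=\D$, and the spectral theorem yields
$$\|(A-\lambda I)\xi_k\|_m^2=\int_{|x-\lambda|<1/k}(1+x^{2m})|x-\lambda|^2\,d\|E(x)\xi_k\|^2\leq\frac{1}{k^2}\|\xi_k\|_m^2.$$
For $k\geq 1$ the spectral support of $\xi_k$ lies in the fixed compact set $[\lambda-1,\lambda+1]$, on which every weight $1+x^{2j}$ and its reciprocal is uniformly bounded above and below by constants depending only on $\lambda$ and $j$. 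Hence $\|\xi_k\|_m$ is uniformly bounded while $\|\xi_k\|_n$ is bounded below by a positive constant independent of $k$, contradicting the regularity inequality.

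The point I expect to require most care is the identification of the interspace extension $S_{\H_n}$ with the functional calculus extension of $A$ along the scale, so that the inequality derived from $\lambda\in\varrho_{n,m}(S)$ can be tested against the spectrally localised vectors $\xi_k$; once this identification is available, the contradiction runs uniformly in the signs of $n$ and $m$ (positive, negative, or mixed) thanks to the compact-support reduction just described.
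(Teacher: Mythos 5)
Your proof is correct, and for the harder inclusion it follows a genuinely different route from the paper's. For $\varrho_\H(A)\subseteq\varrho^{\gF_0}(S)$ you exhibit the single pair $(\H_1,\H_0)$ and verify boundedness of $(A-\lambda I)^{-1}:\H_0\to\H_1$ by functional calculus; the paper instead proves the stronger statement $\varrho_\H(A)\subseteq\varrho_{n,n-1}(S)$ for every $n\geq 1$ (a refinement it reuses in Example \ref{ex. 42}), though for the equality claimed in the proposition one pair suffices, as you observe. For the converse inclusion the paper argues directly: from $\lambda\in\varrho_{n,n-1}(S)$, $n\geq 0$, it commutes $R_n=(I+A^{2n})^{1/2}$ through $(A-\lambda I)^{-1}$ to conclude that the inverse is $\H$-bounded, treats negative indices separately via the duality of Remark \ref{rem_316}, and still has to dispose of the pairs $(n,m)$ with $m\neq n-1$ by showing $\varrho_{n,m}(S)=\emptyset$ there. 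Your contradiction argument with spectrally localized unit vectors $\xi_k$ replaces all three steps at once: it gives $\varrho_{n,m}(S)\subseteq\varrho_\H(A)$ uniformly in $n,m$, whatever their signs and whether or not the set is empty, precisely because the weights $1+x^{2j}$ and their reciprocals are comparable to $1$ on the fixed compact window $[\lambda-1,\lambda+1]$. The identification you flag as the delicate point is in fact harmless: the estimate $\|\xi\|_n\leq C\|(S_{\H_n}-\lambda I)\xi\|_m$ need only be tested on $\xi_k\in\D$, where $S_{\H_n}\xi_k=S\xi_k=A\xi_k$ by the very definition of the continuous extension. The only slip is notational: for $m<0$ the weight in your displayed integral should be $(1+x^{2|m|})^{-1}$ rather than $1+x^{2m}$, but your parenthetical about reciprocals already covers the correct reading.
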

\begin{proof}
 Let $\lambda \in \varrho _{n,n-1} (S)$, { $n\geq 0$. Then, } $(A-\lambda I)^{-1} \in {\mc B}(\h _{n-1}, \h _n)$, indeed
$$ \| (A-\lambda I)^{-1} \xi \| _n \leq C\| \xi\| _{n-1},\quad \forall \xi \in \h_{n-1}$$
i.e.
$$ \| R_n(A-\lambda I)^{-1} \xi \| \leq C\| R_{n-1}\xi\|, \quad \forall \xi \in \h_{n-1},$$
where $R_n = (I+A^{2n})^{\frac{1}{2}}$.
Then
$$ \| (A-\lambda I)^{-1} R_n \xi \| \leq C\| R_{n-1}\xi\| \leq C \| R_n\xi\|, \quad \forall \xi \in \h _n .$$
This implies that $(A-\lambda I)^{-1}$ is bounded w.r. to the norm of $\h$ on the subspace $R_n \h_n$, and, since $R_n$ is invertible with bounded inverse, it follows that $\lambda$ belongs to the usual resolvent, $\varrho _\h (A)$, of $A$.
{Let $\mu \in \varrho _{-n+1,-n} (S)$, $n\geq 0$. Then, by Remark \ref{rem_316}, $\overline{\mu}\in \varrho _{n,n-1}(S)$. The first part of the proof then shows that $\overline{\mu}\in \varrho _\h (A)$. This, in turn, implies that $\mu \in \varrho _\h (A)$.}

{Let now $\lambda\in \varrho _\h (A)$; then $(A-\lambda I)^{-1} \in \B(\h)$. Now we want to prove that $\lambda \in \varrho _{n,n-1}(S)$, $\forall n\geq 1$.
Since $(A-\lambda I)^{-1}$ maps $\h_n$ into $ \h_{n-1}$ and it is is clearly injective, we only need to prove the surjectivity.
Let $\eta\in \h_{n-1}\subseteq \h$ and $\xi\in D(A)$ be such that $(A-\lambda I)\xi = \eta$, then necessarily $\xi = (A-\lambda I)^{-1} \eta\in \h_n$. }

Finally, it is easy to see that $\varrho_{n,m}(S)=\emptyset$ if $m\neq n-1$.

In conclusion, $\varrho ^{\gF _0} (S) = \varrho _\h (A)$.
\end{proof}

The situation becomes more involved if $S$ is a symmetric operator possessing many self-adjoint extensions. In this case in fact, we have to deal with a true multivalued resolvent function.
\beex \label{ex. 42}
Let $S$ be a closed symmetric operator with equal and finite defect indices. Again we put $$\D^\infty (S) = \bigcap _{n\geq 0} \D (S^n)$$
and, also in this case, $\D^\infty (S)$ is dense in $\h$ \cite[Prop. 1.6.1]{schmu}.
If $S^{'}$ is a self-adjoint extension of $S$, we clearly have $$\D(S^n)\subset \D({S^{'}}^n),\quad \forall n\geq 1$$
and then
$$ \D^\infty (S) \subset \D^\infty (S^{'})  .$$
Let assume that $S$ has a family $\{S_\alpha\}_{\alpha\in I}$ of self-adjoint extensions.
We put $\h_{\alpha , n} = \D(S_\alpha ^n)$ endowed with the graph norm as before and consider
$$  \gF _0 = \{ \h _{\alpha , n}; \alpha\in I, n\in {\mb N}\}$$
Then $S\in \mc{C}(\h _{\alpha , n}, \h _{\beta , m} )$ if and only if $\alpha = \beta$ and $m\leq n-1$.
By the previous result, it follows that
$$ \varrho_{\h_{\alpha , n}, \h_{\alpha , n-1}} (S) = \varrho _\h (S_\alpha) .$$
Hence $\varrho ^{\gF _0}(S) = \cup_{\alpha\in I}\hspace{0,2cm} \varrho _\h (S_\alpha)$.

Let $S_\alpha$ and $S_\beta$ be two different self-adjoint extensions of $S$; then the essential spectra $\sigma _{ess} (S_\alpha)$ and $\sigma _{ess} (S_\beta)$ are equal \cite[Theorem 8.18]{weidman}, while the point spectra $\sigma _p (S_\alpha)$ and $\sigma _p (S_\beta)$ are different, in general.

A well known concrete example is provided by the differential operator on an interval of the real line.
Let us consider, in fact, $$D(S^*)=\{f\in L^2([0,1]):f(x)=f(0)+\int_0^xg(t)dt; g\in L^2([0,1])\},$$
$$D(S)=\{f\in D(S^*):f(0)=f(1)=0\},$$
$$D(S_\alpha)=\{f\in D(S^*):f(1)=\alpha f(0),\hspace{0,2cm} \alpha\in \mathbb{C} \mbox{ with } |\alpha|=1\}$$ and $S^*f:=-ig.$

Then $S$ is closed and symmetric but not self-adjoint, $S_\alpha$ is a self-adjoint extension of $S$ for every $\alpha\in \mathbb{C}$ with $|\alpha|=1$. The point spectrum of $S$ is empty and its whole spectrum is $\sigma(S)=\mathbb{C}$; as for the $S_\alpha$'s, one has $\sigma_p(S_\alpha)=\{arg(\alpha)+2k\pi, k\in\mathbb{Z}\}=\sigma_\H(S_\alpha)$. Hence  $$\varrho ^{\gF _0} (S)=\bigcup_{\alpha:\hspace{0,09cm} |\alpha|=1}\varrho_\H(S_\alpha)=\bigcup_{\alpha: \hspace{0,09cm}|\alpha|=1}(\mathbb{C}\setminus\{arg(\alpha)+2k\pi, k\in\mathbb{Z}\})=\mathbb{C}.$$

It is worth remarking that the result $\varrho ^{\gF _0} (S)={\mb C}$ does not change if we take as $I$ a proper subset $J$ of the unit circle $\{\alpha \in {\mb C}; |\alpha|=1\}$ consisting of at least two different (modulo $2\pi$) points.

Now we consider the ``global'' resolvent multivalued function
$$\lambda\in\varrho^{\gF_0}(S)\to(\textbf{S}-\lambda\textbf{ I})^{-1}=\{(S_\alpha-\lambda I)^{-1}\}.$$
Let us  first introduce some notation. We recall that the operator  $S$ has defect indices $(1,1)$. For every $\lambda\in \mathbb{C}\setminus\mathbb{R}$, the subspace $M_\lambda$ of solutions of the equations $$S^*\Phi_\lambda=\lambda\Phi_\lambda,$$ has dimension $1$.\\ It is easily seen that $\Phi_\lambda(x)=Ke^{i\lambda x}$.\\

The Krein formula \cite{Akhiezer}, allows us to compute:

\begin{equation}\label{1}
    ((S_\alpha-\lambda I)^{-1}-(S_\beta-\lambda I)^{-1})g=\mu(\lambda)\ip{g}{\Phi_{\overline{\lambda}}}\Phi_{\lambda},
\end{equation}
where $\mu(\lambda)\neq0$ and the functions $\lambda\to \mu(\lambda)$ and $\lambda\to \Phi_\lambda$ are analytic in $\varrho_\H(S_\alpha)\bigcap\varrho_\H(S_\beta)$.\\
This formula shows that, in general, $(S_\alpha-\lambda I)^{-1}$ and $(S_\beta-\lambda I)^{-1}$ are not analytic continuations of each other, since the r.h.s. in \eqref{1} is zero if and only if $S_\alpha=S_\beta$. More precisely, in our case

\begin{align*}(((S_\alpha-\lambda I)^{-1}-(S_\beta &-\lambda I)^{-1})g)(x)\\ &=\left(\frac{1}{\alpha-e^{i\lambda}}\,-\,\frac{1}{\beta-e^{i\lambda}}\right)ie^{i(x+1)\lambda}\int_0^1g(\tau)e^{-i\lambda\tau} d\tau\end{align*} which vanishes if and only if $\alpha=\beta$.
\enex

\beex Let us consider the operator $H_0=-\frac{d^2}{dx^2} $. Our aim here is to provide a  family of
intermediate spaces for $H_0$ and find the corresponding
resolvent.  Of course, there are several possible domains $\D$ such that $H_0$ (or, better, its restriction to $\D$) can be considered as an element of $\LDD$. Let us examine shortly two cases.
\begin{enumerate} \item First, we take $\D= \S:=\S({\mb R})$. In this case, $H_0$ is the so-called free Hamiltonian of quantum mechanics and the simplest Schr\"odinger operator. It is easily seen that $H_0 \in \gl(\S,\S^\times)$ (more precisely, $H_0\in \Lc^\dagger(\S)$) and it is essentially self-adjoint on $\S$; its closure, $H_1:=\overline{H_0}$, is defined on the Sobolev space $W^{2,2}({\mb R})$.
As discussed at the beginning of Section 4.1, a natural choice for the family $\gF _0$ consists in taking the scale of Hilbert spaces generated by $H_1$. This is, actually, a chain of Sobolev spaces; i.e., $\gF _0 = \{ W^{2m,2}(\mathbb{R})[\|\cdot \|_{m}] ; m\in {\mb Z}\}$ (as before, \mbox{$\|\cdot \|_{\pm n}:=\| (I +H^{2n})^{\pm 1/2} \cdot \|$}, with $n\in\mathbb{N}$).
Hence, by Proposition \ref{prop_selfadj}, $$\sigma ^{\gF _0} (H_0) = \sigma _{\H} (H_1) = \mathbb{R}^+ \cup \{0\}.$$
\item A second case of interest arises if we impose a boundary condition by taking, for instance,  $\D:=\S_y:= \{f \in \S: f(y)=0\}$, $y \in {\mb R}$, with the topology induced by $\S$. This domain is used when {\em perturbing} the free Hamiltonian with a $\delta$-interaction centered at $y$, \cite{albeverio}.
The domain of the closure $H_1$ of $H_0$ is $W_y^{2,2}(\mathbb{R})= \{f\in W^{2,2}(\mathbb{R}) ; f(y)=0\}$. As shown in \cite[Th. 3.1.1]{albeverio}, the operator $H_1$ is no longer self-adjoint; it has, in fact, defect indices (1,1) and, for each $\alpha\in \mathbb{R}$, it possesses a self-adjoint extension $H_\alpha$.
The domain of $H_\alpha$ is
$$ \hspace{1.2cm}D(H_\alpha)=\left\{ g\in W^{1,2}(\mb R) \cap W^{2,2}({\mb R}\setminus \{y\}):\, g'(y^+)-g'(y^-)=\alpha g(y)\right \}. $$

As for the spectrum, we have

 $$ \sigma_\H (H_\alpha)=\left\{\begin{array}{ll}\mathbb{R}^+ \cup \{0\} & \mbox{ if } \alpha \geq 0\\ \mathbb{R}^+ \cup \{-\frac{\alpha^2}{4},0\} & \mbox{ if } \alpha < 0,\end{array}\right. $$ since for $\alpha<0$, $-\frac{\alpha^2}{4}$ is an eigenvalue of $H_\alpha$.

Then, proceeding as in Example \ref{ex. 42}, we get that $$\varrho ^{\gF _0} (H)=\bigcup_{\alpha\hspace{0,09cm} \in \mathbb{R}}{\varrho}_{\H}(H_\alpha) = \mathbb{C}\setminus \{\mathbb{R}^+\cup \{0\}\}$$ where $  \gF _0 = \{ \H_{\alpha , n}; \alpha\in I, n\in {\mb N}\}$ (with $ \H_{\alpha , n} = \D(H_\alpha ^n)$ endowed with the graph norm, as before). Hence, also in this case, we get $$\sigma ^{\gF _0} (H) = \mathbb{R}^+ \cup \{0\}.$$\end{enumerate}

\enex

\subsection{Generalized eigenvalues and generalized eigenvectors}\label{sect_ geneigenvalue}
As announced in Section \ref{sect_resolvent}, we consider now the problem of the existence of a complete set of generalized eigenvalues of a self-adjoint operator $\H$ and give a slight improvement of Gelfand theorem on this subject.

Let $A$ be a self-adjoint operator with $A\geq I$ in Hilbert space $\H$. Let us assume that $A^{-1}$ is a Hilbert-Schmidt operator. Let us take, as in Section \ref{subsect_symmetricop}, $\D=\D^{\infty}(A)$ and $\gF_0=\{\H_n, n \in {\mb Z}\}$ the chain of Hilbert spaces generated by the powers of $A$
(in this case, the graph norm of $\H_n$ can be equivalently be taken as $\|\cdot\|_n=\|A^n\cdot\|$).
Since $\D$ is a Fr\'echet space, $\LBDD{}=\LDD$ and, for every $X \in \LDD$, there exists $n \in {\mb N}$ such that $X \in \mc{C}(\H_n, \H_{-n})$. Hence, we can define an operator $X_0$ in the following way
\begin{align*}
&D(X_{0,n})= \{\xi \in \H \cap \H_n: X\xi \in \H\},\\
&X_{0,n}\xi=X\xi, \; \xi \in D(X_0).
\end{align*}
In general, $D(X_0)$ is not dense in $\H$ and may reduce to the null subspace only.

\begin{thm} \label{thm_geigen}Let $X\in \LDD$ be symmetric, with $\D= \D^\infty(A)$ where $A$ is a self-adjoint operator with $A\geq I$, in Hilbert space $\H$, whose inverse $A^{-1}$ is Hilbert-Schmidt.
Assume that  $X \in \mc{C}(\H_n, \H_{-n})$, for some $n \in {\mb N}$, and that $X_{0,n}$ is densely defined and essentially selfadjoint.  Then $X$ has a complete set of generalized eigenvectors.
\end{thm}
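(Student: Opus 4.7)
The plan is to apply the Gelfand--Berezansky nuclear spectral theorem to the self-adjoint closure $\widetilde{X}$ of $X_{0,n}$, exploiting the Hilbert--Schmidt rigging produced by the powers of $A$, and then to translate the resulting generalized eigenvectors into the framework of Definition \ref{defn_geneigenvalues1}.

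First, since $A\geq I$ and $A^{-1}$ is Hilbert--Schmidt, a composition argument shows that $A^{-k}$ is Hilbert--Schmidt for every $k\geq 1$. Consequently, each embedding $\H_k\hookrightarrow\H$ is Hilbert--Schmidt and, in particular, the rigging $\H_n\subset\H\subset\H_{-n}$ is quasi-nuclear. Next, let $\widetilde{X}$ be the self-adjoint closure of $X_{0,n}$ in $\H$, which exists by hypothesis. Applying the Gelfand--Berezansky theorem to $\widetilde{X}$ with respect to this rigging yields a regular Borel measure $\mu$ on $\sigma(\widetilde{X})\subset{\mb R}$ and, for $\mu$-almost every $\lambda$, a nonzero vector $\Phi_\lambda\in\H_{-n}$ satisfying the weak eigenvalue relation
\[\langle\widetilde{X}\xi,\Phi_\lambda\rangle=\lambda\langle\xi,\Phi_\lambda\rangle\]
whenever the pairing on the left is defined, together with the Parseval identity
\[(\xi,\eta)_\H=\int_{\mb R}\langle\xi,\Phi_\lambda\rangle\overline{\langle\eta,\Phi_\lambda\rangle}\,d\mu(\lambda),\qquad \xi,\eta\in\H_n,\]
which expresses completeness of $\{\Phi_\lambda\}$ in the rigging.

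Then, since $\widetilde{X}$ extends $X_{0,n}$, which coincides with the restriction of $X_{\H_n}$ to $D(X_{0,n})$, and since $X_{\H_n}\colon\H_n\to\H_{-n}$ is continuous, one may rephrase the eigenvalue equation in terms of $X_{\H_n}$ on a dense portion of $\H_n$ and promote it by continuity to all of $\H_n$. Finally, to realize each $\Phi_\lambda$ as a generalized eigenvector of $X$ in the sense of Definition \ref{defn_geneigenvalues1}, the symmetry $X=X^\dag$ allows one to define, by duality, an extension of $X$ on the interspace $\E:=\H_{-n}$, so that the eigenvalue identity becomes $X_\E\Phi_\lambda=\lambda\Phi_\lambda$ in $\D^\times$; the Parseval identity then delivers the required completeness.

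The main obstacle I expect is precisely this last step: extending $X$ to the interspace $\H_{-n}$ in a manner compatible with Definition \ref{defn_geneigenvalues1}, since the hypothesis $X\in\mc{C}(\H_n,\H_{-n})$ does not by itself provide such an extension, and one must lean on the symmetry together with the structure of $\widetilde{X}$. A secondary technical point is to ensure that the $\Phi_\lambda$ produced by the theorem lie in the specific space $\H_{-n}$ matching the index $n$ of the hypothesis, so that completeness refers to a single concrete element of the chain generated by $A$, as announced in the introduction.
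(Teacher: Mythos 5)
Your overall strategy---differentiating the spectral measure of the self-adjoint closure of $X_{0,n}$ through the Hilbert--Schmidt rigging $\H_n\subset\H\subset\H_{-n}$ and extracting a Parseval identity---is the same as the paper's, which in effect gives a self-contained proof of the relevant fragment of the Gelfand--Berezansky theorem: for a cyclic vector $\zeta$ it sets $\sigma(\lambda)=\ip{E(\lambda)\zeta}{\zeta}$, forms $\chi(\lambda)=dE(\lambda)\zeta/d\sigma(\lambda)$, proves the bound $|\ip{\chi(\lambda)}{\phi}|\leq C\|\phi\|_n$ by extracting a convergent subnet of $A^{-n}E(\Delta)\zeta/\sigma(\Delta)$ (compactness of $A^{-n}$), so that $\chi(\lambda)\in\H_{-n}$, and gets completeness from the Gelfand--Shilov Parseval formula on each cyclic subspace $\M(\zeta_\alpha)$. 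But two steps of your proposal are genuine gaps, not technicalities. First, the black-box nuclear spectral theorem you invoke assumes the self-adjoint operator is standardly connected with the rigging (typically $\H_+\subseteq D(B)$, or at least a subspace dense in $\H_+$, contained in $D(B)$ and mapped continuously into $\H$). Here none of this is available: $D(X_{0,n})$ is only assumed dense in $\H$, not in $\H_n$; $\H_n\not\subseteq D(\overline{X_{0,n}})$; and even $\D\not\subseteq D(X_{0,n})$ in general, since $X\xi$ need not lie in $\H$ for $\xi\in\D$. This is exactly why the paper redoes the differentiation argument by hand rather than quoting the theorem.

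Second, your proposed resolution of the step you yourself flag as the obstacle does not work: symmetry gives, via \eqref{eq_adjoint}, only $X=X\ad\in\mc{C}(\H_{-n}^\times,\H_n^\times)=\mc{C}(\H_n,\H_{-n})$ again, i.e.\ no continuous extension of $X$ to $\E:=\H_{-n}$, so you cannot literally write $X_\E\Phi_\lambda=\lambda\Phi_\lambda$ with $\E=\H_{-n}$ as in Definition \ref{defn_geneigenvalues1}. The paper instead establishes the eigenvector property weakly, computing
$\ip{\chi(\lambda)}{X\phi}=\lim_\Delta\ip{E(\Delta)\zeta/\sigma(\Delta)}{X\phi}=\lambda\ip{\chi(\lambda)}{\phi}$ for every $\phi\in\D$,
which uses both the symmetry of $X$ and the explicit realization of $\chi(\lambda)$ as a limit of normalized spectral increments on which $\overline{X_{0,n}}$ acts approximately as multiplication by $\lambda$. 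Your argument never supplies this bridge between the Hilbert-space operator $\overline{X_{0,n}}$ and the continuous extension $X_{(n)}$ on $\H_n$; without it, the vectors $\Phi_\lambda$ produced by the abstract theorem are not shown to be generalized eigenvectors of $X$ in the sense required.
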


\begin{proof} Let $\{E(\lambda)\}$ be the spectral family of $\overline{X_0}$ and $\zeta$ a unit vector in $\H$. Put $\sigma(\lambda)=\ip{E(\lambda)\zeta}{\zeta}$. Then $\sigma$ defines a measure on ${\mb R}$ and almost everywhere with respect to $\sigma$ there exists the derivative $$ \frac{dE(\lambda)\zeta}{d\sigma(\lambda)}=:\chi(\lambda).$$
As shown in \cite[Sect. 4.3, Theorem 1]{gelfand}, $\chi(\lambda)$ is a continuous linear functional on $\D$. This set of functionals is complete in the sense that, for every vector $\phi \in \M(\zeta)$, the closed subspace generated by $\{ E(\lambda)\zeta; \lambda \in {\mb R}\}$, one has \cite[Sect. 4.3, Theorem 2]{gelfand}
\begin{equation}\label{eq_geneigenvect} \phi= \int_{\mb R} \overline{ \ip{\chi(\lambda)}{\phi}}\chi(\lambda) d \sigma(\lambda) \;\mbox{ and }\;
\|\phi\|^2= \int_{\mb R} |{ \ip{\chi(\lambda)}{\phi}}\chi(\lambda)|^2 d \sigma(\lambda).\end{equation}
We want to prove that $\chi(\lambda) \in \H_{-n}$.
We denote by $\Delta$ the interval $[\alpha, \beta]$ containing the point $\lambda$ and by $E(\Delta)$ the operator $E(\beta)-E(\alpha)$. Assume that the interval $\Delta$ contracts to the point $\lambda$. Then, for every $\xi \in \D$, we have
\begin{align*} \ip{\chi(\lambda)}{\phi}&= \lim_\Delta \ip{\frac{E(\Delta)\zeta}{\sigma(\Delta)}}{\phi}\\ &= \lim_\Delta \ip{A^{-n}\frac{E(\Delta)\zeta}{\sigma(\Delta)}}{A^n\phi}.
\end{align*}
Now, we observe that $\|E(\Delta)\zeta/\sigma(\Delta)\|=1$; hence by the compactness of $A^{-1}$, there exists a subnet $\{E(\Delta')\zeta/\sigma(\Delta')\}$ such that $A^{-n}E(\Delta')\zeta/\sigma(\Delta')$ converges in $\H$. This implies that
$$ |\ip{\chi(\lambda)}{\phi}|\leq C \|A^n\phi\|= C\|\phi \|_n, \quad \phi \in \D.$$
Thus $\chi(\lambda)$ extends to a continuous conjugate linear functional on $\H_n$. We denote this extension by the same symbol. Since $X \in \mc{C}(\H_n, \H_{-n})$, denoting by $X_{(n)}$ the corresponding extension we get, for every $\phi \in \D$, in complete analogy to \cite[Sect. 5.2, Theorem 1]{gelfand},
\begin{align*}
\ip{X_{(n)}\chi(\lambda)}{\phi}&=\ip{\chi(\lambda)}{X\phi} \\
&= \lim_\Delta \ip{\frac{E(\Delta)\zeta}{\sigma(\Delta)}}{X\phi}\\
&= \lambda \ip{\chi(\lambda)}{\phi}.
\end{align*}
Hence $\chi(\lambda)$ is an $\gF_0$-generalized eigenvector.
The final step of the proof consists in decomposing $\H$ into an orthogonal sum of subspaces of the type $\M(\zeta_\alpha)$, as in \cite{gelfand}.
\end{proof}

\berem We recall a well known situation where Theorem \ref{thm_geigen} can be applied. This is the case where $X \in \mc{C}(\H_1, \H_{-1})$ and $[A_0, X] \in \mc{C}(\H_1, \H_{-1})$, (here $[A_0, X]$ denotes the commutator of $A_0:=A\upharpoonright_\D \in \LD$ and $X$ which is well-defined since $A_0$ has a continuous extension $\hat{A}_0$ to $\D^\times$ and, therefore, one can define
$[A_0, X]\xi = \hat{A}_0X\xi - XA_0\xi, \quad \xi \in \D$).

Then by the commutator theorem \cite[Sec. X.5]{reedsimon}, $X_0$ is densely defined and it is essentially self-adjoint on every core for $A$.\enrem

\berem Once a notion of spectrum is at hand, it is natural to pose the question as to whether other aspects of the beautiful spectral theory for  operators in Hilbert space extend to the  different environment we have considered here. Thus, one first asks oneself if a given operator $X=X\ad \in \LDD$ gives rise to a resolution of the identity in a possibly generalized sense. Some hints come from the previous discussion on generalized eigenvectors and eigenvalues we have done in this section. However, in the case considered here, for instance in Theorem \ref{thm_geigen}, we have, since from the very beginning, a  well behaved operator $X$ and both the resolution of the identity $\{\chi(\lambda); \lambda \in {\mb R}\}$ of the rigged Hilbert space obtained in \eqref{eq_geneigenvect} and the measure $\sigma$ are determined by the spectral family of the essentially self-adjoint operator $\overline{X_0}$.  In the general case, we conjecture that it is not possible to associate a resolution of the identity to a symmetric element of $\LDD$. These operators in fact can be so far from being Hilbert space operators (see, for instance the example in Section 5) as to make impossible the use of the powerful tools at our disposal in a Hilbert space theory. Thus, a first step in the direction of getting more results on the existence of a resolution of the identity should consist in considering operators with a {\em sufficiently large} restriction to the central Hilbert space $\H$.  We hope to consider this problem in a future paper.
\enrem

\berem In the language of Quantum Mechanics the {\em resonant spectrum} of the Hamiltonian operator $H$ of a physical system (a selfadjoint operator in Hilbert space) consists of complex nonreal generalized eigenvalues when the operator (or, more precisely, a restriction of it) is considered as acting in a rigged Hilbert space. The smallest space $\D$ of the rigged Hilbert space is determined by physical conditions (outgoing boundary conditions, labeled observables, etc.) \cite{delamadrgadella, civitagadella,jpact_book} so that the whole construction depends on the model under consideration. There are several nonequivalent definitions of the resonant spectrum (some of them are discussed also in \cite[Sect. 7.2.2, 7.2.3]{jpact_book}) but it mostly stems out from the spectrum of some extension of $H$, in the very same spirit of what we have done in this paper. The resonant (Gamow) states are eigenvectors of the extension  $H^\times$ of $H$ to $\D^\times$ (which certainly exists if $H\D\subseteq \D$). These eigenvectors certainly do not belong to $\H$ since $H$ can only have real eigenvalues; thus they are true objects in $\D^\times$. As shown before, we can perform some spectral analysis of $H$ by choosing a convenient family of interspaces $\gF_0$.  The resonant spectrum however is not contained in $\sigma^{\gF_0}(H)$ if $\gF_0$ contains $D(H)$, considered as Hilbert space with its graph norm, and $\H$.  Indeed, in this case, $\sigma^{\gF_0}(H) \subseteq \sigma_\H(H)\subseteq {\mb R}$. However, the complex eigenvalues can certainly be found in one of the sets $\sigma_{\E,\cF}(H)$ that determine the spectrum $\sigma^{\gF_0}(H)$, if the family of interspaces $\gF_0$ is sufficiently rich. This situation shows that the whole family of spectra $\{\sigma_{\E,\cF}; \E, \cF \in \gF_0\}$ should be taken into account in order to get enough information on $H$. \enrem
\section{Examples}\label{sect_examples} The following examples give some motivations to the ideas developed in the paper.
\beex \label{ex_delta} Let $\S$ and $\S^\times$ be as in Example \ref{op p}.
The operator $M_\delta$ defined by $M_\delta: \S \to \S^\times$, with
$$\ip {M_\delta f}{g} =\ip{\delta f}{g}= \ip{\delta}{f^*g}=f(0)\overline{g(0)}$$
is the {\em multiplication} operator by the Dirac $\delta$ distribution.
It is easily seen that $M_\delta \in \LSS$. We want to determine the spectrum of $M_\delta$. First we look for eigenvalues, i.e. for all $\lambda \in {\mb C}$ such that $(M_\delta - \lambda I)f=0$ has nonzero solutions.

If $\lambda$ is an eigenvalue and $f$ is an eigenvector, then
$$ \ip{M_\delta f}{g}=\lambda\ip{f}{g} , \quad \forall g \in \S;$$
i.e.,
\begin{equation}\label{star} f(0)\overline{g(0)}=\lambda\ip{f}{g}.\end{equation}
If we take $f=g$ we obtain $|f(0)|^2=\lambda \|f\|^2_2$, hence $\lambda\geq 0$.

It is clear that $\lambda=0$ is an eigenvalue. The corresponding eigenspace $\S_0:=\{f \in \S,\, f(0)=0\}$. This subspace is closed in $\S$ with its usual topology, but it is dense in $L^2({\mb R})$ with respect to its norm.

If $\lambda>0$, from \eqref{star}, we get
$$ |f(0)\overline{g(0)}|=\lambda|\ip{f}{g}|\leq \lambda \|f\|_2\|g\|_2, \quad \forall g\in \S.$$
If we choose $g_n(x)= \frac{n}{\sqrt{\pi}}\exp \{-n^2 x^2/2\}$, $n \in {\mb N}$, then $\|g_n\|=1$ and we should have
$$ \frac{n}{\sqrt{\pi}} |f(0)| \leq \lambda \|f\|, \quad n \in {\mb N};$$
then $f(0)=0$. This in turn implies that $\lambda=0$, a contradiction.
Hence, $0$ is the unique eigenvalue of $M_\delta$.

Thus, if $\lambda \in {\mb C}\setminus\{0\}$, the operator $M_\delta- \lambda I$ is injective. It is easy to check that the range $\mc{R}(M_\delta-\lambda I)$ is the following subspace of $\S^\times$
$$\mc{R}(M_\delta-\lambda I) = \{f(0)\delta -\lambda f;\, f\in \S\}.$$
There are no values of $\lambda$ for which this space contains $L^2({\mb R})$. So that  $(M_\delta -\lambda I)^{-1}$ cannot be defined on the whole $\S^\times$.

Let us consider as $\gF_0$ the family of Sobolev spaces $W^{k,2}(\mb R)$ and their duals; i.e.,
$\gF_0=\{W^{k,2}(\mb R), k\in {\mb Z}\},$
where $W^{0,2}(\mb R)=L^2(\mb R)$. From now on we shorten $W^{k,2}(\mb R)$ as $W^{k,2}$, etc.

If $k>0$, $M_\delta$ has a continuous extension to $W^{k,2}$ denoted by $M^{(k)}_\delta$ and defined by
 $$\ip {M^{(k)}_\delta f}{g} =\ip{\delta f}{g}=f(0)\overline{g(0)}, \quad \forall f,g \in W^{k,2}.$$
The continuity of every function in $W^{k,2}$ ensures that the right hand side of the previous equality is well-defined. It is easily seen that $M^{(k)}_\delta$ has no nonzero eigenvalues.
By \cite[Th. VIII.7]{brezis}, every $f\in W^{k,2}$ is bounded and then one has, for some $C>0$,
$$\left|\ip {M^{(k)}_\delta f}{g}\right|=|f(0)\overline{g(0)}|\leq \|f\|_\infty \|g\|_\infty \leq C \|f\|_{k,2} \|g\|_{r,2}.$$
This implies that $M^{(k)}_\delta f \in W^{-r,2}.$ Hence $M_\delta \in {\mc C}(W^{k,2}, W^{-r,2})$, for every $k,r>0$.
Since ${\mc R}(M^{(k)}_\delta -\lambda I)$, $\lambda \in {\mb C}$ never contains $L^2$, it follows that $\varrho_{k,-r}(M_\delta)=\emptyset$, where, for short, $ \varrho_{k,-r}(M_\delta)=\varrho_{W^{k,2},W^{-r,2}}(M_\delta)$.
On the other hand, $M_\delta \not\in {\mc C}(W^{k,2},W^{r,2}) \cup  {\mc C}(W^{-k,2},W^{-r,2})$, $k,r >0$ since, otherwise $\delta$ should be regular distribution.
In conclusion, $\sigma^{\gF_0}(M_\delta)={\mb C}$ and $0$ is the unique eigenvalue.
 \medskip

More generally, one can consider the distribution $C = \sum_{n\in {\mb Z}} \delta_n$, where $\ip{\delta_n}{f}= \overline{f(n)}$, the so-called {\em $\delta$-comb}, and the operator $M_C$ defined on $\S$ by
$$ \ip{M_C f}{g} = \ip{C}{ f^*g}= \sum_{n\in {\mb Z}} f(n)\overline{g(n)}.$$
The series on the r.h.s. converges for every $f,g \in \S$.
Every $n\in {\mb Z}$ is an eigenvalue with corresponding eigenspace $\S_n=\{f\in \S;\, f(n)=0\}$.
If we take again $\gF_0=\{W^{k,2},\, k \in {\mb Z}\}$, one finds,
also in this case, by an argument similar to the previous one,  that $\sigma^{\gF_0}(M_C)={\mb C}$.
\enex

\beex Let us consider the operator $M_\Phi$ of multiplication by a tempered distribution $\Phi$:
$$\ip{M_\Phi f}{g}= \ip{\Phi}{f^*g}, \quad f,g \in \S.$$
As shown in \cite{tratschi}, $M_\Phi \in \LSS$.

To begin with, let us first consider the case $\Phi= \Phi_h$, where $\Phi_h$ is the regular tempered distribution defined by a measurable slowly increasing function $h$; this means that there exists $m\in {\mb N}\cup\{0\}$ such that
$$ \int_{\mb R} |h(x)| (1+|x|)^{-m} dx <\infty.$$
The action of $M_{\Phi_h}$ is given by
$$\ip{M_{\Phi_h} f}{g}= \ip{\Phi_h}{f^*g} = \int_{\mb R} h(x) f(x) \overline{g (x)} dx, \quad f,g \in \S$$ and $M_{\Phi_h} \in \LSS$.

The eigenvalue equation
\begin{equation} \label{sec_eq} M_{\Phi_h} f - \lambda f=0, \quad f \in \S\end{equation}
has nonzero solutions in $\S$ if, and only if, $h$ is constant a.e. and $h(x)= a$, for almost every $x \in {\mb R}$, with $a \in {\mb C}$ and $\lambda=a$. In this case, $\sigma(M_{\Phi_h})= \sigma_p(M_{\Phi_h})= \{a\}$.

The operator $ (M_{\Phi_h} - \lambda I)^{-1}$ exists for every $\lambda \not\in \overline{h({\mb R})}$, the closure of the essential range of $h$. The operator $(M_{\Phi_h} - \lambda I)^{-1}$ can be identified with the operator of multiplication by the function $g=(h-\lambda)^{-1}$. Clearly, $(M_{\Phi_h} - \lambda I)^{-1}$ is defined on the subspace
$$ \M:= \{ \Phi \in \S^\times;\, \Phi \mbox{ is regular and } \Phi= \Phi_{(h-\lambda)f}, \, f \in \S\}. $$
Since $\M\subsetneqq \S^\times$, then $\varrho_{\S, \S^\times}(M_{\Phi_h})=\emptyset$.

Let us consider as $\gF_0$ the family of Sobolev spaces $W^{k,2}(\mb R)$ and their duals as in Example \ref{ex_delta}.

For shortness we put $\varrho_{W^{k,2}, W^{m,2}}(M_{\Phi_h}):= \varrho_{k,m}(M_{\Phi_h})$, $k,m \in {\mb Z}$.

Let us assume that $h \in L^{\infty}(\mb R)$. Then, for every $f\in W^{k,2}$, $g \in \S$
\begin{equation}\label{ineq_sobolev}|\ip{M_{\Phi_h} f}{g}|\leq \|h\|_\infty  \|f\|_{2} \|g\|_{2}\leq \|h\|_\infty \|f\|_{{k,2}}\|g\|_{{r,2}},\quad\forall k, r\in{\mb N}.\end{equation}

Hence, for every $f\in W^{k,2}$,   $M_{\Phi_h} f $ is a continuous linear functional on every $W^{r,2}$, $r\geq 0$ and, by \eqref{ineq_sobolev},
$M_{\Phi_h}\in {\mc C}(W^{k,2}, W^{-r,2})$, for every $k, r\in{\mb N}$.
The operator $(M_{\Phi_h} - \lambda I)^{-1}$ is defined on the subspace
$$ \M_{k,r}:= \{ \Phi \in W^{-r,2};\, \Phi \mbox{ is regular and } \Phi= \Phi_{(h-\lambda)f}, \, f \in W^{k,2}\}. $$
Then, we have the following situation:
\begin{description}
\item[$k=0, r=0$] In this case $\varrho_{0,0}(M_{\Phi_h})= {\mb C}\setminus\overline{h(\mb R)}$;
\item[$k>0, r=0$] The operator $M_{\Phi_h} - \lambda I$ is not onto, hence $\varrho_{k, 0}(M_{\Phi_h})=\emptyset$;
\item[$k>0, r>0$] $\M_{k,r}\subsetneqq W^{-r,2}$ and $\varrho_{k, -r}(M_{\Phi_h})=\emptyset$.
\end{description}
It remains to check the cases of indices both positive or both negative.
For this, let us assume that $h\in L^{\infty} \setminus W^{1,2}_{\rm loc}$. In this case $M_{\Phi_h}\not\in {\mc C}(W^{p,2}, W^{q,2})$ with $p, q>0$, since, if $hf \in W^{q,2}$ then $h \in W^{q,2}_{\rm loc}\subset W^{1,2}_{\rm loc}$ . By duality there cannot be a continuous extension of $M_{\Phi_h}$ belonging to ${\mc C}(W^{p,2}, W^{q,2})$ with $p, q<0$,
since, otherwise, we would have $M_{\Phi_{\overline{h}}}\in{\mc C}(W^{-q,2}, W^{-p,2})$.
Finally, we notice that no continuous extension of $M_{\Phi_h}$ belonging to ${\mc C}(W^{-p,2}, W^{q,2})$ with $p, q>0$ may exists, since, otherwise,
from $W^{p,2}\subset W^{-p,2}$ it would follow $M_{\Phi_h}g \in W^{q,2}$, for every $g \in W^{p,2}$ and this is excluded.
In conclusion, $\varrho^{\gF_0}(M_{\Phi_h})= {\mb C}\setminus\overline{h(\mb R)}$.

{Let us finally examine a case where $h$ is not bounded but slowly increasing. For instance, $h(x)=x$, $x \in {\mb R}$. We consider again as $\gF_0$ the chain of Sobolev spaces considered above. First we notice that every real number $\lambda$ is an $\gF_0$-generalized eigenvalue: indeed, the distribution $\delta_\lambda$, the Dirac delta centered at $\lambda$, is in $\in W^{-1,2}$ and one has
$$ \ip{(M_x-\lambda I) \delta_\lambda}{f}= \ip{\delta_\lambda}{(x-\lambda)f}= \lambda\overline{f(\lambda)}-\lambda\overline{f(\lambda)}=0, \quad \forall f\in W^{1,2}.$$
Since, for every $r\geq 0$
$$ |\ip{M_xf}{g}|\leq \|xf\|_2\|g\|_2\leq \|xf\|_2\|g\|_{r,2}, \quad \forall f \in \S, g \in W^{r,2},$$
$M_xf$ is, for every $f\in \S$, an element of $W^{-r,2}$, but $M_x\not\in{\mc C}(W^{k,2}, W^{-r,2})$, $k >0$. Moreover, $M_x\not\in{\mc C}(W^{k,2}, W^{r,2})$, for $k,r\geq 0$, since this would imply that the operator of multiplication by $x$, regarded as an operator in $L^2$ should be everywhere defined on $W^{k,2}$ and this is not true. By a duality argument we can also exclude that $M_x\in {\mc C}(W^{-r,2}, W^{k,2})$ and $M_x\in {\mc C}(W^{-r,2}, W^{-k,2})$, $r, k>0$.
Thus in this case $\sigma^{\gF_0}(M_x)={\mb C}$.

The situation changes if we include in $\gF_0$ the extreme spaces $\S$ and $\S^\times$: in this case the spectrum coincides with the usual spectrum $\sigma_\H(M_x)$ of $M_x$ since $M_x \in {\mc C}(\S, L^2)$ and $M_x$ is essentially selfadjoint on $\S$.}
\enex

{\beex As it is well known, the Hermite functions defined by $\phi_0(x)=\pi^{-1/4}e^{-x^2/2}$ and
$$ \phi_n(x)= (2^nn!)^{-1/2}(-1)^n \pi^{-1/4}e^{x^2/2}\left( \frac{d}{dx} \right)^n e^{-x^2}$$
constitute an orthonormal basis of $L^2({\mb R})$. If $f \in {\mc S}$, then $f$ has the expansion
\begin{equation}\label{hermite} f= \sum_{n=0}^\infty c_n\phi_n, \mbox{ with } \sup_n |c_n|n^m <\infty, \; \forall m\in{\mb N}\end{equation}and the series converges in the topology of ${\mc S}$.
 The space of sequences $\{c_n\}$ satisfying, for a given $m \in {\mb N}$,
$$\sup_n |c_n|n^m <\infty, $$
will be denoted by ${\sf s}_m$. We will indicate with ${\sf s}$ the so-called space of {\em rapidly decreasing sequences}; i.e.,
${\sf s}= \bigcap_{m \in {\mb N}}{\sf s}_m$.

An element $F\in {\mc S}^\times$ can be represented as
\begin{equation}\label{eq_seq} F= \sum_{n=0}^\infty b_n\phi_n, \mbox{ with } |b_n|\leq M(1+n)^s, \; \mbox{ for some } M>0, \, s\in{\mb N},\end{equation}
the series being weakly convergent.

Let now $\{a_n\}$ be a sequence of complex numbers such that
\begin{equation}\label{condition}
\forall \{c_n\}\in {\sf s}, \; \exists \,m \in {\mb N} \mbox{ such that } \sup_n\frac{|a_n||c_n|}{(1+n)^m}<\infty.
\end{equation}
Then
$$ f= \sum_{n=0}^\infty c_n\phi_n \mapsto Af:=\sum_{n=0}^\infty a_nc_n\phi_n $$
defines a linear map from ${\mc S}$ into ${\mc S}^\times$. Since ${\mc S}$ is a reflexive Fr\'echet space, it is sufficient to check that $A$ is continuous from ${\mc S}[\sigma({\mc S}, {\mc S}^\times)]$ into ${\mc S}^\times[\sigma({\mc S}^\times, {\mc S})]$. Continuity follows immediately from the fact that the map $$A\ad: f= \sum_{n=0}^\infty d_n\phi_n \mapsto A\ad f:=\sum_{n=0}^\infty \overline{a_n}d_n\phi_n, \quad\{d_n\} \in {\sf s},$$ is the adjoint of $A$. Hence $A\in \LSS$.
A natural choice of $\gF_0$ consists in taking the spaces $\S_m$ whose elements are all $F \in S^\times$ for which the expansion \eqref{eq_seq} has coefficients in ${\sf s}_m$ and their dual $\S_m^\times$.
It is easy to check that every $a_n$ is an eigenvalue of $A$. Thus, if $\lambda \not\in \overline{\{a_n,\, n \in {\mb N}\}}$, the sequence $\left\{\frac{1}{a_n-\lambda}\right\}$ is bounded. For these values of $\lambda$, the operator $(A-\lambda I)^{-1}$ maps $\S_m$ into $\S_m$ (and $\S_m^\times$ into $\S_m^\times$) continuously. Hence $\sigma^{\gF_0}(A)=\overline{\{a_n,\, n \in {\mb N}\}}$.\enex

\bigskip
\noindent{\bf Acknowledgement} One of us (SDB) gratefully acknowledges the warm hospitality of Prof. Maria Fragoulopoulou and of the Department of Mathematics of the University of Athens  where a part of this work was done.

\end{document}